\documentclass[12pt,a4paper]{amsart}
\usepackage{xcolor,amsmath,amssymb,latexsym,amsfonts,amsthm,graphicx,hyperref}
\usepackage[parfill]{parskip}

\usepackage{tikz-cd}

\definecolor{darkgreen}{RGB}{0,100,0}

\newtheorem{theorem}{Theorem}
\newtheorem{corollary}[theorem]{Corollary}

\newtheorem{lemma}[theorem]{Lemma}
\newtheorem{proposition}[theorem]{Proposition}

\def\bds{\begin{displaystyle}}
\def\eds{\end{displaystyle}}

\def\1{{\mathchoice {1\mskip-4mu\mathrm l}      
{1\mskip-4mu\mathrm l}
{1\mskip-4.5mu\mathrm l} {1\mskip-5mu\mathrm l}}}
\newcommand{\floor}[1]{\left\lfloor #1 \right\rfloor}

\usepackage{enumerate}

\usepackage{stmaryrd}
\usepackage{mathtools}
\DeclarePairedDelimiter\IeFer{\llbracket}{\rrbracket} 
\DeclarePairedDelimiter\abs{\lvert}{\rvert} 
\DeclarePairedDelimiter\pa{(}{)} 
\DeclarePairedDelimiter\acc{\{}{\}} 
\DeclarePairedDelimiter\cro{[}{]} 
\newcommand{\eps}{\varepsilon}
\newcommand{\sub}[1]{2^{\IeFer{#1}}} 

\newcommand{\E}{\mathbb{E}}
\renewcommand{\P}{\mathbb{P}}
\newcommand{\sE}{\mathcal{E}}

\newcommand{\s}{\,|\,} 

\newcommand{\st}{\,:\,} 

\newcommand{\N}{\mathbb{N}}
\newcommand{\R}{\mathbb{R}}

\newcommand{\Br}{\mathcal B}
\newcommand{\Er}{\mathcal E}

\newcommand{\Z}{\mathbb{Z}}
\newcommand{\Nt}{\widetilde{N}}

\newcommand{\Abeta}{\text{\lparen A}_\beta\text{\rparen}}

\newcommand{\Cbeta}{\text{\lparen C}_\beta\text{\rparen}}
\newcommand{\modulo}{\!\!\!\mod}
\newcommand{\limites}[2]{\overset{#1}{\underset{#2}{\longrightarrow}}}

\newcommand{\F}{\mathbb F}

\newcommand{\phit}{\widetilde \phi}
\newcommand{\gt}{\widetilde g}
\newcommand{\et}{\widetilde e}
\newcommand{\xt}{\widetilde x}
\newcommand{\phih}{\widehat \phi}
\newcommand{\Ph}{\widehat P}
\newcommand{\eh}{\widehat e}

\newcommand{\ind}[1]{\mathbf{1}_{#1}}

\begin{document}

\title{Are random random walks normal?}
\author[K. Hamza]{Kais Hamza}
\address{School of Mathematics, Monash University, VIC 3800, Australia}
\email{Kais.Hamza@monash.edu}
\author[L. Tournier]{Laurent Tournier}
\address{Université Sorbonne Paris Nord, LAGA, CNRS, UMR 7539, F-93430, Villetaneuse, France}
\email{tournier@math.univ-paris13.fr}

\begin{abstract}
Given a symmetric simple random walk $(X_n)_{n \ge 0}$, the family of all symmetric simple random walks $(Y_n)_{n \ge 0}$ adapted to the filtration of $(X_n)_{n \ge 0}$ was studied in Collevecchio et al.\ (2022). In particular, the authors established necessary and sufficient conditions under which the suitably normalized two-dimensional process $((X_n, Y_n))_{n \ge 0}$ converges  weakly to a two-dimensional Brownian motion. 

When this occurs, we say that the random walk $(Y_n)_n$ is \emph{normal}  (with respect to $(X_n)_n$). In this paper, we investigate whether a ``randomly selected'' $(Y_n)_{n \ge 0}$ is normal. We consider a very general randomization procedure and look at both the quenched and annealed settings.
\end{abstract}

\maketitle

\section{Introduction}

The interplay between dependence, randomness, and structural symmetry in discrete stochastic systems continues to provide rich insights into the theory of random walks and their continuous limits. The framework of \emph{General Bootstrap Random Walks (GBRWs)}, introduced by Collevecchio, Hamza, Shi, and Williams (2021), considers a model in which the increments of a symmetric simple random walk $X$ are recycled through non-anticipative, measure-preserving transformations, to define the increments of a new symmetric simple random walk $Y$. 
These transformations, deterministic in the original formulation, give rise to a two-dimensional process
$$
(X_n, Y_n), \qquad n \ge 0,
$$
whose asymptotic behavior depends on the specific recycling rule governing the evolution of $Y_n$.

To describe the dynamics precisely, let $(\xi_n)_{n\ge1}$ denote the increments of $(X_n)_{n\ge0}$; that is, a sequence of independent and identically distributed random variables taking values in $\sE=\{-1,+1\}$ with
$$
\mathbb{P}(\xi_n = +1) = \mathbb{P}(\xi_n = -1) = \tfrac12.
$$
The second coordinate is constructed by recycling the increments of $(X_n)_{n\ge1}$ through a non-anticipative, measure-preserving transformation. We set $\eta_1 = \xi_1$ and, for each $n \ge 2$, define
$$
\eta_n = \phi_{n-1}(\xi_1,\dots,\xi_{n-1})\,\xi_n,
$$
where $\phi_{n-1}$ is deterministic and takes values in $\sE$. We then define
$$
Y_0 = 0, \qquad\text{and for $n\ge1$,}
\quad 
Y_n = \sum_{k=1}^n \eta_k .
$$

The predictability of $(\phi_{n-1})_n$ ensures that $(\eta_n)_n$ is non-anticipative, while the multiplicative structure guarantees that the increments satisfy
$$
\forall (y_1,\ldots,y_n)\in\sE^n,\quad\mathbb{P}(\eta_1=y_1,\ldots,\eta_n=y_n) = \tfrac1{2^n},
$$
so that $Y$ has the same increment distribution as $X$. 

The earlier work~\cite{CHSW} established conversely that these properties only hold for such transformations, and developed a hierarchy of limit theorems for the normalized pair
$$
W_t^{(n)} = \left(\frac{X_{\lfloor nt \rfloor}}{\sqrt{n}}, \frac{Y_{\lfloor nt \rfloor}}{\sqrt{n}}\right),
$$
based on necessary and sufficient conditions for convergence to Gaussian limits --- either independent, correlated, or degenerate --- summarized in the following result. 

\begin{theorem}[\cite{CHSW}]\label{mainth}
$W^{(n)}$ converges weakly to a two-dimensional Brownian motion (possibly degenerate) with correlation $\rho$ if and only if
\begin{description}
  \item[(A)] $\displaystyle \rho = \lim_{n \to \infty} \frac{1}{n} \sum_{k=1}^n \mathbb{E}[\zeta_{k-1}]$ exists, where $\zeta_{k-1} = \eta_k \xi_k(=\phi_{k-1}(\xi_1,\ldots,\xi_{k-1})$) ($\zeta_0=1$);
  \item[(B)] $\displaystyle \lim_{n \to \infty} \frac{1}{n^2} \sum_{k=1}^n \sum_{\ell=1}^n \mathbb{E}[\zeta_{k-1}\zeta_{\ell-1}] = \rho^2.$
\end{description}
\end{theorem}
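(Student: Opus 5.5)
We plan to prove Theorem~\ref{mainth} with the martingale functional central limit theorem. The pair $(X_n,Y_n)$ is a martingale in the filtration $\mathcal F_n=\sigma(\xi_1,\dots,\xi_n)$. Its increments $(\xi_k,\eta_k)=(\xi_k,\zeta_{k-1}\xi_k)$ are bounded by $1$. Since $\zeta_{k-1}$ is $\mathcal F_{k-1}$-measurable and $\xi_k$ is independent of $\mathcal F_{k-1}$, the conditional covariance matrix of the $k$-th increment is
\[
\begin{pmatrix}1&\zeta_{k-1}\\ \zeta_{k-1}&1\end{pmatrix}.
\]
The predictable quadratic covariation of $W^{(n)}$ at time $t$ therefore has diagonal entries $\lfloor nt\rfloor/n\to t$ and off-diagonal entry $C_n(t)=\frac1n\sum_{k\le \lfloor nt\rfloor}\zeta_{k-1}$. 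Because the increments are bounded by $1/\sqrt n$, the conditional Lindeberg condition is automatic, and tightness of $W^{(n)}$ holds regardless of $\phi$.

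\emph{Sufficiency.} Assume (A) and (B). The key observation is that
\[
\mathbb E\Big[\Big(\tfrac1n\sum_{k=1}^n\zeta_{k-1}\Big)^2\Big]=\frac1{n^2}\sum_{k,\ell}\mathbb E[\zeta_{k-1}\zeta_{\ell-1}],
\]
so (A) and (B) together say that $\frac1n\sum_{k\le n}\zeta_{k-1}$ has mean tending to $\rho$ and second moment tending to $\rho^2$. Its variance therefore tends to $0$, and it converges to $\rho$ in $L^2$. Replacing $n$ by $\lfloor nt\rfloor$ and rescaling gives $C_n(t)\to\rho t$ in probability for each fixed $t$. By the martingale FCLT (Rebolledo, or Ethier--Kurtz Thm.~7.1.4), $W^{(n)}$ then converges to a Brownian motion with covariance $\begin{pmatrix}1&\rho\\ \rho&1\end{pmatrix}$. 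This limit is degenerate when $|\rho|=1$.

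\emph{Necessity.} Assume $W^{(n)}\Rightarrow B$, a Brownian motion with correlation $\rho$.
\begin{enumerate}
\item The moments of $W^{(n)}_1$ are uniformly bounded, since increments are bounded and Burkholder's inequality applies. This gives uniform integrability, so $\mathbb E[X_nY_n]/n\to\mathbb E[B^1_1B^2_1]=\rho$.
\item Orthogonality of martingale increments gives $\mathbb E[X_nY_n]=\sum_k\mathbb E[\xi_k\eta_k]=\sum_k\mathbb E[\zeta_{k-1}]$. This yields (A).
\item For (B), we use the martingale $M_n=X_nY_n-\sum_{k\le n}\zeta_{k-1}$, whose increments are $\xi_kY_{k-1}+\eta_kX_{k-1}$. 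From the joint convergence in distribution of $W^{(n)}$ and of the quadratic variation, $\frac1n\sum_{k\le n}\zeta_{k-1}$ must converge in probability to the bracket of the limit, namely the deterministic constant $\rho$.
\item Since $|\frac1n\sum_{k\le n}\zeta_{k-1}|\le 1$, this convergence upgrades to $L^2$. That gives exactly (B).
\end{enumerate}

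The delicate point is step 3, the identification of the limit of the brackets. We would use the standard fact that for uniformly bounded-jump martingales converging in law, $(W^{(n)},[W^{(n)}])\Rightarrow(B,[B])$ (Jacod--Shiryaev, Thm.~VI.6.26, via the P-UT property). For our walks, $[X,Y]_n=\sum_k\xi_k\eta_k=\sum_k\zeta_{k-1}$ exactly. The scaled bracket therefore converges in law to the constant $\rho t$, and hence in probability.

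If we prefer to avoid that machinery, the fallback is a direct fourth-moment argument. We would compute $\mathbb E[(X_nY_n)^2]/n^2\to\mathbb E[(B^1_1B^2_1)^2]=1+2\rho^2$, using uniform integrability of the fourth moments. Expanding $(X_nY_n)^2$ through the martingale decomposition should express this quantity as $1+2\cdot\frac1{n^2}\sum_{k,\ell}\mathbb E[\zeta_{k-1}\zeta_{\ell-1}]+o(1)$. Carrying out that expansion and controlling its cross terms is where we expect the main bookkeeping effort.
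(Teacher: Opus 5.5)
Your main argument is correct and follows the route the paper relies on: it cites \cite{CHSW} together with the martingale central limit theorem of Jacod--Shiryaev (Theorem VIII.3.11), under which weak convergence is equivalent to (C), and (C) is equivalent to (A)+(B) since $\frac1n\sum_{k\le n}\zeta_{k-1}$ is bounded by $1$ and is simultaneously the square bracket and the predictable bracket of the two coordinates of $W^{(n)}$ at time $1$ (invoking VI.6.26 for necessity is an inessential variant of the ``only if'' half). Drop the fourth-moment fallback, though: the exact identity is $\E[(X_nY_n)^2]=n^2-2n+2\sum_{k,\ell\le n}\E[\zeta_{k-1}\zeta_{\ell-1}]+4\,\E[M_nC_n]$ with $C_n=\sum_{k\le n}\zeta_{k-1}$ (this is $4\sum_{j\le n}\E[M_{j-1}\zeta_{j-1}]$ after Abel summation), and the last term is only $O(n^2)$ a priori, its normalized limit $4\rho\,\E[B^1_1B^2_1-\rho]=0$ being identifiable only once you already know $C_n/n\to\rho$, i.e.\ the very bracket convergence the fallback was meant to avoid.
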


These conditions ensure convergence of the quadratic covariation process of $(X_n, Y_n)$ and completely describe the Gaussian limit regime.

Note that (A) and (B) together are equivalent to (see Theorem VIII.3.11 of \cite{JacodShiryaev})
\begin{description}
    \item[(C)] $\displaystyle \frac1n\sum_{k=1}^n\zeta_{k-1}\limites{}{n\to\infty}\rho$ in probability.
\end{description}

Several prototypical examples from~\cite{CHSW} illustrate how different recycling rules generate distinct limit behaviors:
\begin{itemize}
  \item \emph{Product bootstrap:} when $\eta_n = \prod_{k=1}^n \xi_k$, despite the strong dependence, a normalized $(X_n, Y_n)$ converges to a pair of independent Brownian motions.
  \item \emph{Correlated Brownian motion:} when $\eta_n = \max(\xi_{n-m}, \ldots, \xi_{n-1})\xi_n$ for fixed $m \ge 1$, the limit process is a two-dimensional Brownian motion with correlation $\rho = 1 - 2^{1-m}$, which is arbitrarily close to $1$ for large values of $m$.
  \item \emph{Sign transformation:} $\eta_n = \operatorname{sgn}(X_{n-1})\xi_n$ corresponds to a discrete analogue of the Lévy transformation
  \[
  (B_t)_{t\ge0} \mapsto \pa[\bigg]{\int_0^t \operatorname{sgn}(B_s)\,dB_s}_{t\ge0},
  \]
  it fails to satisfy (B)
  and leads to a \emph{non-Gaussian} limiting process
  \[
  W_t = \left(B_t, \int_0^t \operatorname{sgn}(B_s)\,dB_s\right).
  \]
\end{itemize}

The article \cite{CHSW} relied on a parameterization of the aforementioned non-anticipative, measure preserving transformations (i.e.\ ``recycling mechanisms'') by a family of binary coefficients $\beta_{n,K}\in\{0,1\}$, indexed by subsets $K\subset\IeFer n=\{1,\ldots,n\}$, through the representation
\begin{equation}
\eta_n = \phi_{n-1}(\xi_1,\ldots,\xi_{n-1})\xi_n = \xi_n\prod_{K \subset \IeFer{n-1}} \xi_{[K]}^{\,\beta_{n-1,K}},\label{eq:representation_historic}
\end{equation}
where $\xi_{[K]} = \max_{k \in K} \xi_k$. 
Every such transformation indeed admits a unique representation of this form (this shall also be discussed below, see in particular Appendix~\ref{app:boolean_functions}). 
Note that the quantities $\xi_{[K]}$ simply encode the presence of positive entries on subsets of coordinates; indeed, $\xi_{[K]}=1\ \Leftrightarrow\ \exists i\in K,\,\xi_i=1$. 

\medskip
\noindent

The present paper extends this framework by introducing \emph{randomness} into the bootstrap transformation. In the original GBRW, the recycling rules were deterministic. Here, we allow the recycling functions $\phi_n$ themselves to be random, thereby introducing an additional layer of stochasticity that interacts with the intrinsic randomness of the underlying walk. This naturally leads to annealed and quenched formulations of the bootstrap process: in the annealed setting, one averages over both the evolution of the walk and the randomness of the resampling mechanism, whereas in the quenched setting, the random environment is fixed and one studies the walk conditionally on the realization of the recycling mechanism.

Allowing the sequence $(\phi_n)_n$ to be random raises immediate questions regarding both its marginal distributions and its dependence structure. A natural starting point is to assume that the $\phi_n$ are independent and uniformly distributed over a suitable class, and independent of the underlying walk. However, our aim is to develop a more flexible framework that accommodates richer forms of dependence.

The space of functions $\phi_n:\mathcal{E}^n\to\mathcal{E}$ forms a $2^n$-dimensional $\mathbb{F}_2$-vector space ($\mathbb{F}_2=\mathbb{Z}/2\mathbb{Z}=\{0,1\}$), naturally identified with $\{0,1\}^{2^n}$. Consequently, specifying a probability distribution on a random function $\phi_n$ is equivalent to specifying a joint distribution for its coordinates $(\beta_{n,a})_{a\in\mathcal{E}^n}$ with respect to a chosen basis $\mathfrak{g}_n=(g_{n,a})_{a\in\mathcal{E}^n}$ of $\mathcal{E}^{\mathcal{E}^n}$: for any $\mathfrak{b}_n=(b_{n,a})_{a\in\mathcal{E}^n}$, if we define the function $f_n:x\mapsto\prod_{a\in\mathcal{E}^n}g_{n,a}(x)^{b_{n,a}}$, then
$$\mathbb{P}(\phi_n=f_n) = \mathbb{P}(\beta_{n,a}=b_{n,a},\forall a\in\mathcal{E}^n).$$

The case of independent uniformly distributed functions corresponds to choosing the coefficients $(\beta_{n,a})$ as independent and identically distributed $\mathrm{Bernoulli}(1/2)$ random variables. In general however, the resulting distribution on functions depends on the choice of bases, which is thus a crucial component of the construction. 

One may expect the construction to address the temporal structure of the model, i.e.\ its dependence on the parameter $n$. Thus, as discussed in Appendix~\ref{sec:consistency}, it is natural to impose consistency conditions on the sequence of bases. Imposing these conditions severely restricts the admissible constructions: there exist only \emph{two} consistent sequences of bases. Each arises from a partial order on $2^{\IeFer{n}}$, hence the indexing shall thereafter be transferred from $\mathcal{E}^n$ to $2^{\IeFer{n}}$ for convenience.
In each setting, the corresponding basis is denoted by
$\mathfrak{g}_n=(g_{n,K})_{K\subset \IeFer n}$
with the distinction between the two cases made clear by the surrounding context. Alternatively, we argue in Appendix~\ref{sec:representations} that these two sequences of bases are two natural examples, arising as canonical bases and ubiquitous in the literature on binary functions. 

These two sequences of bases are referred to as the \emph{totally unordered case} (T.U.\ case) and the \emph{inclusion-induced case} (I.I.\ case).
Let, for $x\in\mathcal{E}^n$, $\kappa(x)=\kappa_n(x)=\{k\in\IeFer{n}:x_k=-1\}$. 

\begin{itemize}
\item In the totally unordered case, one has $g_{n,K}(x)=-1$ if and only if $\kappa(x)=K$, so that
$$\forall x\in\Er^n,\qquad\phi_n(x)=(-1)^{\beta_{n,\kappa(x)}}.$$
Hence, $\phi_n(x)$ is determined directly by $\beta_{n,\kappa(x)}$.

\item In the inclusion-induced case, corresponding to the representation~\eqref{eq:representation_historic}, $g_{n,K}(x)=x_{[K]}(=\max_{k\in K}x_k)$, so that
$$\forall x\in\Er^n,\qquad\phi_n(x)
=\prod_{K\subset\IeFer{n}} x_{[K]}^{\beta_{n,K}}
=(-1)^{\sum_{K\subset\kappa(x)} \beta_{n,K}}.$$
Therefore, $\phi_n(x)$ is determined by the parity of $\displaystyle\sum_{K\subset\kappa(x)} \beta_{n,K}$ and hence depends on all variables $\beta_{n,K}$ corresponding to subsets $K$ of $\kappa(x)$. Although this won't be used in the following, one may note that the coefficients $\beta_{n,K}$ can conversely be explicitly computed from the values of $\phi_n$ through a dual formula, see~\eqref{eq:inversion}.
\end{itemize}

The aim of this paper is to investigate a broad class of randomizations arising from these constructions. Each sequence of bases induces a distinct mechanism of randomization, and we study both under a range of assumptions on the law $\mu$ of the family of random coefficients $(\beta_{n,K})_{K\subset\IeFer{n},n\in\mathbb{N}}$.

It is natural to expect that the case in which the coordinates are independent and identically distributed Bernoulli$(1/2)$ random variables -- equivalently, when the $\phi_n$ are independent and uniformly distributed -- gives rise to a standard Brownian motion. As the following sections demonstrate, this is indeed the case. This setting will appear as a special instance of the more general framework developed below.

Rather than assuming full independence, we consider a setting in which the family
$\bigl((\beta_{n,K})_{K\subset\IeFer{n}}\bigr)_{n\in\mathbb{N}}$
is partitioned into independent blocks, allowing for various dependence structures within each block. We denote by $p_{n,K}$ the parameter of the Bernoulli variable $\beta_{n,K}$. 

We consider three natural ways of partitioning the coordinates: according to the level $n$, according to the subset $K$, and according to $|K|$, the size of $K$. 

In each of the considered cases, we show that $(W^{(n)})_n$ converges weakly to a nondegenerate Brownian motion, with either independent or correlated components, either under the annealed distribution or a.s.\ under the quenched distribution. The following summarizes the main results of this paper. Note that when quenched convergence holds, then annealed convergence holds as a consequence and is not mentioned below; when annealed convergence is mentioned, its quenched counterpart is unresolved in this paper and remains as an open problem.  

\begin{enumerate}

\item The blocks are indexed by $n$: the blocks $\mathfrak b_n=(\beta_{n,K})_{K\subset\IeFer n}$, $n\in\mathbb{N}$, are independent.
\begin{center}
\begin{tabular}{|c|c|c|c|c|}\hline
Basis & Dependence inside & Parameters & Convergence & Statement\\\hline\hline
T.U. & Arbitrary & $p_{n,K}=p_n$, $\frac1n\sum_{k=1}^n p_k\to p$ & Quenched & Proposition \ref{tot_unord_quenched}\\\hline
\end{tabular}
\end{center}

\item The blocks are indexed by $K$: the blocks $\mathfrak{b}_K = (\beta_{n,K})_{n \ge \max K}$, $K\subset\mathbb{N}$, $|K|<+\infty$, are independent.
\begin{center}

\begin{tabular}{|c|c|c|c|c|}\hline
Basis & Dependence inside & Parameters & Convergence & Statement\\\hline\hline
I.I. & Arbitrary & $\eps<p_{n,K}<1-\eps$  &  Quenched & Theorem \ref{th:Kindep}\\\hline
T.U. & Equal & $p_{n,K}=p$ & Quenched & Proposition \ref{tot_unorder_Kindep}\\\hline
\end{tabular}
\end{center}

\item The blocks are indexed by $n$ and $k=|K|$: the blocks $\mathfrak{b}_{n,k} = (\beta_{n,K})_{|K|=k}$, $n\in\mathbb{N}$, $0\le k\le n$, are independent.
\begin{center}

\begin{tabular}{|c|c|c|c|c|}\hline
Basis & Dependence inside & Parameters & Convergence & Statement\\\hline\hline
I.I. & Equal & $p_{n,K}=p_{n,\abs K}$,  $\eps<p_{n,k}<1-\eps$ & Annealed & Corollary \ref{cor:Markov}\\\hline
\end{tabular}
\end{center}

\item The blocks are indexed by $k=|K|$: the blocks $\mathfrak{b}_k = (\beta_{n,K})_{n\ge k\vee1,|K|=k}$, $k\in\mathbb{N}\cup\{0\}$, are independent.
\begin{center}

\begin{tabular}{|c|c|c|c|c|}\hline
Basis & Dependence inside & Parameters & Convergence & Statement\\\hline\hline
I.I. & Equal & $p_{n,K}=p_{\abs K}$, $\eps<p_{k}<1-\eps$ & Annealed & Corollary \ref{cor:Markov-Depend}\\\hline
\end{tabular}
\end{center}

\end{enumerate}

\subsection*{Organization of the paper} After a formal introduction of the setting (Section~\ref{sec:setting}), we focus on the I.I.\ representation, first under a general assumption on the law $\mu$ of $(\beta_{n,K})_{n\in\N,K\subset\IeFer n}$, that covers the i.i.d.\ case (Section~\ref{sec:general}), then under a more restrictive case that we interpret as 1-factor Markov representations (Section~\ref{sec:II_1fMarkov}). The results for the T.U.\ representation, which turns out to enforce more independence and enables simpler proofs (yet under different assumptions on $\mu$), are gathered in Section~\ref{sec:totally_unordered}. Finally, the relevance of the choice of representations is discussed in Appendix~\ref{app:boolean_functions}, where it is justified first as being a consequence of a natural consistency condition, second as being two instances of canonical, hence natural, bases. 

\section{Formal setting}\label{sec:setting}

The setting can be formalized as follows. 

Let $(\mathfrak{g}_n)_{n\in\N}=((g_{n,K})_{K\subset \IeFer n})_{n\in\N}$ be either of the following two sequences of bases: 
\begin{description}
    \item[I.I.\ case] for all $n\ge1$, $K\subset\IeFer n$, $g_{n,K}:x\in\mathcal E^n\mapsto x_{[K]} := \max_{k\in K}x_k\in\mathcal E$; 
    \item[T.U.\ case] for all $n\ge1$, $K\subset\IeFer n$,  $g_{n,K}:x\in\mathcal E^n\mapsto (-1)^{\ind{\{K=\kappa_n(x)\}}}\in\mathcal E$, 
\end{description}
where we recall that $\kappa_n(x)=\acc{k\in\IeFer n\st x_k=-1}$ for $x\in\Er^n$.

On the probability space $(\Omega,\mathcal F, \mathbb P)$, the $\sE$-valued sequence $(\xi_n)_{n\ge1}$ and the $\{0,1\}$-valued sequence $\beta:=(\beta_{n,K})_{n\ge0,\ K\in\sub n}$ are independent sequences such that
\begin{itemize}
    \item $\xi:=(\xi_n)_{n\ge1}$ are i.i.d.\ random variables uniformly distributed in $\sE$;
    \item $\beta:=(\beta_{n,K})_{n\in\mathbb N, K\in\sub n}$ follows some distribution $\mu$ (to be specified in the statements of results).
\end{itemize}
Define $\zeta_0=1$ and, for all $n\ge1$, 
\[\zeta_n:=\prod_{K\subset\IeFer n}g_{n,K}(\xi_1,\ldots,\xi_n)^{\beta_{n,K}}\quad \mbox{ and }\quad \eta_n:=\zeta_{n-1}\xi_n.\]
That is, 
\begin{itemize}
    \item in the I.I.\ case, 
\begin{equation}\label{eq:expression_zeta}
\zeta_n = \prod_{K \subset \IeFer{n}} \xi_{[K]}^{\beta_{n,K}} =
(-1)^{\sum_{K\subset\kappa_n(\xi)}\beta_{n,K}} ;
\end{equation}
    \item in the T.U.\ case, $\zeta_n =  (-1)^{\beta_{n,\kappa_n(\xi)}}.$
\end{itemize}
Finally, $X_n=\xi_1+\cdots+\xi_n$ and $Y_n=\eta_1+\cdots+\eta_n$ for $n\ge 0$. And, for $n\ge1$ and $t\ge0$, $W^{(n)}_t=\frac1{\sqrt n}(X_{\floor{nt}},Y_{\floor{nt}})$. 

For any sequence $\beta$, denote by $P^\beta$ the conditional distribution of $\zeta$ given $\beta$. Thus, \[\mathbb P(\zeta\in\cdot)=\int P^\beta(\zeta\in\cdot)d\mu(\beta).\]

Recall that by construction, for any $\beta$, under $P^\beta$, the sequence $\eta$ is distributed as $\xi$. Since this distribution does not depend on $\beta$, the same therefore holds under $\mathbb P$ as well. 

We are interested in the asymptotics of the distribution of the variable $W^{(n)}$ under $\mathbb P$ (known as the \emph{annealed} or \emph{averaged} measure), and under $P^\beta$ for a given $\beta$ (known as the \emph{quenched} measure). 

Because of Theorem~\ref{mainth}, $W^{(n)}$ converges under $P^\beta$ to a two-dimensional Brownian motion with correlation $\rho_\beta$ if, and only if one has both conditions
\begin{description}
    \item[$\text{(A}_\beta\text{)}$] $\displaystyle\frac1N\sum_{n=1}^{N}E^\beta[\zeta_{n-1}]\to\rho_\beta$ as $N\to\infty$;  
    \item[$\text{(B}_\beta\text{)}$] $\displaystyle\frac1{N^2}\sum_{m=1}^N\sum_{n=1}^{N}E^\beta[\zeta_{m-1}\zeta_{n-1}]\to\rho_\beta^2$ as $N\to\infty$. 
\end{description}

Furthermore, as can be be checked from the proof of Theorem~\ref{mainth}, its statement holds unchanged under the annealed measure (indeed $(X_n,Y_n)$ is still a martingale under $\P$): $W^{(n)}$ converges under $\P$ to a two-dimensional Brownian motion with correlation $\rho$ if, and only if (A) and (B) hold.

\section{I.I.\ representation, main result}\label{sec:general}

Let us consider representations in the I.I.\ bases. 

Let $p\in(0,1)$ and consider the following two sets of assumptions on the family $(\beta_{n,K})_{n,K}$. 

\begin{description}
    \item[(G1)] The family $(\beta_{n,K})_{n\in\mathbb{N},\,K\subset\IeFer n}$ consists of i.i.d.\ Bernoulli random variables with parameter~$p$.

    \item[(G2)] For all $n\in\mathbb{N}$ and $K\subset\IeFer n$, we set $\beta_{n,K}=\beta_K$, where $(\beta_K)_{K\subset\mathbb{N},\, K\text{ finite}}$ is a family of i.i.d.\ Bernoulli random variables with parameter $p$.
\end{description}

These are two particular (and somehow extreme) cases of the general framework of our main result that we now introduce. 

For each finite subset $K \subset \mathbb{N}$, let $\mathfrak{b}_K = (\beta_{n,K})_{n \ge \max K}$. 

In this section, we impose independence of the sequence $(\mathfrak{b}_K)_{K}$ while allowing arbitrary (possibly non-stationary) dependence within each sequence $\mathfrak{b}_K$. More specifically, we make the following main assumption. 

\begin{description}
    \item[(G)] The family $(\mathfrak{b}_K)_{K \subset \mathbb{N},\, |K| < \infty}$ is mutually independent, the random variables $\beta_{n,K}$ are Bernoulli random variables, and their parameters $p_{n,K}$ are elliptic, i.e.\ uniformly bounded away from $0$ and $1$: there exists $\varepsilon\in(0,1/2)$ such that
    \[
    p_{n,K}\in[\varepsilon,1-\varepsilon]
    \]
    for all admissible $(n,K)$.
\end{description}

Our result in this setting is the following. 

\begin{theorem}[I.I.\ representation]\label{th:Kindep} Assume (G). Then convergence to a standard Brownian motion (with zero correlation) holds for almost every $\beta$: ``almost every random walk is normal''. 
\end{theorem}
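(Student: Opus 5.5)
By Theorem~\ref{mainth}, which applies under $P^\beta$ for fixed $\beta$, it suffices to prove that for $\mu$-almost every $\beta$ both $\Abeta$ and $\Cbeta$ hold, i.e.\ $\text{(A}_\beta\text{)}$ and $\text{(B}_\beta\text{)}$, with $\rho_\beta=0$. Concretely, we need
\[
\frac1N\sum_{n=1}^N E^\beta[\zeta_{n-1}]\to0
\quad\text{and}\quad
\frac1{N^2}\sum_{m,n\le N}E^\beta[\zeta_{m-1}\zeta_{n-1}]\to0 .
\]
The second condition implies the first by Cauchy--Schwarz, since the double sum equals $E^\beta\bigl[(\frac1N\sum\zeta_{n-1})^2\bigr]$. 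So the whole task is to show that $S_N(\beta):=\frac1{N^2}\sum_{m,n\le N}E^\beta[\zeta_{m-1}\zeta_{n-1}]\to0$ almost surely.

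\textbf{Step 1 (a product formula for the quenched correlations).} For $m\le n$, write $\zeta_m\zeta_n=(-1)^{\sum_{K\subset\kappa_m(\xi)}\beta_{m,K}+\sum_{K\subset\kappa_n(\xi)}\beta_{n,K}}$. Averaging over $\beta$ first, the independence of the blocks $\mathfrak b_K$ lets the expectation factorize over $K$. Set $\kappa=\kappa_n(\xi)$, so that $\kappa_m(\xi)=\kappa\cap\IeFer m$. Then
\[
\E\bigl[E^\beta[\zeta_m\zeta_n]^2\bigr]
=\E_{\xi,\xi'}\Bigl[\prod_{K}\E\bigl[(-1)^{\text{sum of }\beta_{\cdot,K}\text{ over the relevant indices}}\bigr]\Bigr],
\]
where $\xi'$ is an independent copy of $\xi$. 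Every $K$ that appears an odd number of times across the four occurrences ($\kappa_m,\kappa_n$ for $\xi$ and for $\xi'$) contributes a factor of absolute value at most $1$. Ellipticity gives a strict contraction $|1-2p|\le1-2\eps$ whenever a single $\beta_{n,K}$ appears an odd number of times. Because the dependence inside $\mathfrak b_K$ is arbitrary, however, a combination such as $\beta_{m,K}+\beta_{n,K}$ may have expectation of $(-1)$ equal to $\pm1$. So I would pick the factors in which $\beta_{n,K}$ (the largest index) appears an odd number of times: conditioning on $(\beta_{j,K})_{j<n}$ does not preserve ellipticity in general. This is the point I expect to be the main obstacle.

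\textbf{Step 2 (handling the arbitrary dependence).} My first attempt avoids the difficulty by choosing subsets $K$ with $\max K>m$. For such $K$, only $\beta_{n,K}$ appears (from $\kappa_n$, and from $\kappa'_n$), with no $\beta_{m,K}$ term, so the factor for $K$ equals $\E[(-1)^{\beta_{n,K}(\ind{K\subset\kappa}+\ind{K\subset\kappa'})}]$. This factor has absolute value at most $1-2\eps$ when exactly one of $K\subset\kappa$, $K\subset\kappa'$ holds. Typically $\kappa\cap\IeFer{m+1,n}$ and $\kappa'\cap\IeFer{m+1,n}$ differ, and when $n-m$ is large there are many singletons $K=\{j\}$, $m<j\le n$, in the symmetric difference, roughly $(n-m)/2$ of them. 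This gives a bound of order $(1-2\eps)^{c(n-m)}$ plus an exponentially small probability, so that $\E[E^\beta[\zeta_m\zeta_n]^2]\le Ce^{-c(n-m)}$. For $m=n$ the correlation is trivially $1$, and the diagonal contributes $O(1/N)$ to $S_N$. The case $m<n$ with the same $K$ handles $\E[(E^\beta\zeta_n)^2]$, using the pair $(\xi,\xi')$, with a bound $Ce^{-cn}$.

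\textbf{Step 3 (almost sure conclusion).} From Step 2, $\E[S_N]\le\frac1N+\frac1{N^2}\sum_{m,n}\E|E^\beta[\zeta_m\zeta_n]|\le\frac CN$. This gives $\sum_k\E[S_{k^2}]<\infty$, so $S_{k^2}\to0$ almost surely by Borel--Cantelli. To interpolate, note that $0\le N^2S_N$ is nondecreasing in $N$, since it is the second moment of the sum and the increments can be bounded. Directly, $|N^2S_N-M^2S_M|\le 2N(N-M)$, which is $o(N^2)$ for $M=k^2\le N<(k+1)^2$. Hence $S_N\to0$ almost surely, which gives $\Cbeta$ (equivalently, $\text{(B}_\beta\text{)}$) with $\rho_\beta=0$, and Theorem~\ref{mainth} yields the claim.
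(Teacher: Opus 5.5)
Your proof is correct. It shares the paper's core mechanism: the independent copy $\xi'$, the factorization over $K$ coming from independence of the blocks $\mathfrak b_K$, and the choice to discard the uncontrollable factors with $K\subset\IeFer m$ and keep only those with $\max K>m$, where $\beta_{n,K}$ appears alone (this is exactly the paper's set $A_{m,n}$). It differs in two respects.

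First, you count only the singletons $\{j\}$, $m<j\le n$, with $\xi_j\ne\xi'_j$, which gives exactly $\E\bigl[(1-2\eps)^{\mathrm{Bin}(n-m,1/2)}\bigr]=(1-\eps)^{n-m}$. The paper instead uses the full power set of $\{i\in\IeFer n\setminus\IeFer m:\xi_i=-1,\ \xi'_i=1\}$. That produces the doubly exponential exponent $2^{\mathrm{Bin}(n-m,1/4)}$ and then needs Lemma~\ref{lem:subexponential} to extract a rate. Since any exponential rate suffices, your count is the more elementary one.

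Second, you obtain $(\mathrm A_\beta)$ and $(\mathrm B_\beta)$ together from the nonnegative quantity $S_N$, using $\E[S_N]=O(1/N)$, summability along $N=k^2$, and interpolation. The paper proves $(\mathrm A_\beta)$ separately via $\sum_n\E[E^\beta[\zeta_n]^2]<\infty$. For $(\mathrm B_\beta)$ it sums second moments over $|m-n|\ge\sqrt n$, getting almost sure smallness of $E^\beta[\zeta_m\zeta_n]$ off a band of $O(N^{3/2})$ pairs. The paper's route yields the stronger pointwise facts $E^\beta[\zeta_n]\to0$, and $E^\beta[\zeta_m\zeta_n]\to0$ off the band. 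Yours controls only the Ces\`aro average, which is all the theorem needs.

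In your route the copy $\xi'$ is in fact superfluous. Since $S_N\ge0$ and $\E[S_N]=\frac1{N^2}\sum_{m,n\le N}\E[\zeta_{m-1}\zeta_{n-1}]$ is an annealed quantity, the same singleton argument applied directly to $\E[\zeta_m\zeta_n\s\xi]$ gives $|\E[\zeta_m\zeta_n]|\le(1-\eps)^{n-m}$, and that is enough.

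Two small corrections:
\begin{enumerate}
\item Your remark that $N^2S_N$ is nondecreasing in $N$ is false in general. With $Z_N=\sum_{n\le N}\zeta_{n-1}$, its increment is $2E^\beta[Z_N\zeta_N]+1$, which can be negative (e.g.\ if $\zeta_1\equiv-1$). You do not use it, though: the direct bound $|N^2S_N-M^2S_M|\le N^2-M^2\le2N(N-M)$ is what makes the interpolation work.
\item In Step 3 you should state explicitly that $\E\bigl|E^\beta[\zeta_m\zeta_n]\bigr|\le\E\bigl[E^\beta[\zeta_m\zeta_n]^2\bigr]^{1/2}$. This is the step that passes from your second-moment bound in Step 2 to the first moment.
\end{enumerate}
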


\begin{corollary}[I.I.\ representation] Assume (G). Then convergence to a standard Brownian motion (with zero correlation) holds under the annealed measure $\P$: ``the random random walk is normal''. 
\end{corollary}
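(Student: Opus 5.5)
The plan is to deduce the annealed statement directly from the quenched one, Theorem~\ref{th:Kindep}, by averaging over the environment with dominated convergence. The point is that the limit law, a standard two-dimensional Brownian motion $B$ with zero correlation, does not depend on $\beta$. So no mixture of limits appears when we integrate against $\mu$.

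Fix a bounded continuous functional $F$ on the Skorokhod space $D([0,\infty),\R^2)$, the space in which the weak convergence of $W^{(n)}$ is understood in Theorem~\ref{mainth}. Since $\xi$ and $\beta$ are independent and $W^{(n)}$ is a measurable function of $(\xi,\beta)$, the disintegration $\P(\cdot)=\int P^\beta(\cdot)\,d\mu(\beta)$ recalled in Section~\ref{sec:setting} applies to $W^{(n)}$ itself. Hence
\[
\E\bigl[F(W^{(n)})\bigr]=\int E^\beta\bigl[F(W^{(n)})\bigr]\,d\mu(\beta).
\]
By Theorem~\ref{th:Kindep}, under (G) there is a set of full $\mu$-measure on which $E^\beta[F(W^{(n)})]\to \E[F(B)]$. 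Since $\abs{E^\beta[F(W^{(n)})]}\le \sup\abs F$, dominated convergence gives $\E[F(W^{(n)})]\to\E[F(B)]$. As $F$ is arbitrary, $W^{(n)}$ converges weakly under $\P$ to $B$.

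The only point needing care is measurability of $\beta\mapsto E^\beta[F(W^{(n)})]$. This is automatic: for fixed $n$, $W^{(n)}$ on any finite horizon depends on finitely many $\xi_k$ and finitely many $\beta_{m,K}$, so this map is a finite sum of measurable functions. Everything else is routine.

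As a consistency check, one could instead verify (A) and (B) under $\P$ directly, using the annealed version of Theorem~\ref{mainth} noted in Section~\ref{sec:setting}. Quenched (A$_\beta$) with $\rho_\beta=0$ for a.e.\ $\beta$, together with boundedness, gives $\frac1N\sum_{n\le N}\E[\zeta_{n-1}]\to0$ by dominated convergence. The same argument applied to (B$_\beta$) gives $\frac1{N^2}\sum_{m,n\le N}\E[\zeta_{m-1}\zeta_{n-1}]\to0$. So this route also works, but the weak-convergence argument above is shorter.
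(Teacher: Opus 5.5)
Your argument is correct and is the paper's route: the paper gives no separate proof of this corollary and relies on its introductory remark that quenched convergence for $\mu$-a.e.\ $\beta$ (Theorem~\ref{th:Kindep}) implies annealed convergence, which is exactly your averaging of $E^\beta[F(W^{(n)})]$ over $\mu$ with dominated convergence, the limit not depending on $\beta$. One small correction: your ``finite sum'' justification of measurability only works for functionals depending on a finite horizon, since a general bounded continuous $F$ on $D([0,\infty),\R^2)$ sees the whole path; measurability of $\beta\mapsto E^\beta[F(W^{(n)})]$ follows anyway from Fubini--Tonelli, because $(\xi,\beta)\mapsto F(W^{(n)})$ is jointly measurable and $\xi$ is independent of $\beta$.
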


We split the proof of Conditions~(A) and (B), leading to Theorem~\ref{th:Kindep}, in two propositions. 

\begin{proposition}[I.I.\ representation] Assume (G). Then, for almost every $\beta$, condition $\text{(A}_\beta\text{)}$ holds with $\rho_\beta=0$. 
\end{proposition}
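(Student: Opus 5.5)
Under $P^\beta$, the coefficients $\beta$ are fixed and $\xi$ is uniform. Hence
\[
E^\beta[\zeta_n]=2^{-n}\sum_{J\subset\IeFer n}(-1)^{\sum_{K\subset J}\beta_{n,K}} .
\]
The plan is to show $E^\beta[\zeta_n]\to0$ almost surely along a summable scheme. It suffices to prove $\sum_n E[(E^\beta[\zeta_n])^2]<\infty$, or at least that this quantity decays fast enough. Then Borel--Cantelli (via Chebyshev) gives $E^\beta[\zeta_n]\to0$ $\mu$-a.s., and Cesàro averaging yields $\text{(A}_\beta\text{)}$ with $\rho_\beta=0$.

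Expanding the square with an independent copy $\xi'$ of $\xi$ gives
\[
E_\mu\big[(E^\beta[\zeta_n])^2\big]=4^{-n}\sum_{J,J'\subset\IeFer n}E_\mu\Big[(-1)^{\sum_{K\subset J}\beta_{n,K}+\sum_{K\subset J'}\beta_{n,K}}\Big]=4^{-n}\sum_{J,J'}E_\mu\Big[(-1)^{\sum_{K\subset J,\,K\not\subset J'}\beta_{n,K}+\sum_{K\subset J',\,K\not\subset J}\beta_{n,K}}\Big].
\]
The terms with $K\subset J\cap J'$ cancel mod 2, so only the symmetric difference of the two down-sets survives. Fix $n$. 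Under (G) the variables $(\beta_{n,K})_K$ are independent across $K$, since distinct $K$ lie in distinct independent blocks $\mathfrak b_K$. Hence the expectation factorizes into $\prod_K E[(-1)^{\beta_{n,K}}]$ over the $K$ in the symmetric difference of the down-sets. By ellipticity each factor has absolute value $|1-2p_{n,K}|\le 1-2\varepsilon$. The bound therefore becomes
\[
4^{-n}\sum_{J,J'}(1-2\varepsilon)^{N(J,J')},
\]
where $N(J,J')$ counts the subsets of $J$ not contained in $J'$, plus the symmetric count. If $J\ne J'$, say $j\in J\setminus J'$, then every $K\subset J$ containing $j$ qualifies, so $N(J,J')\ge 2^{|J|-1}$. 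Similarly $N(J,J')\ge2^{|J'|-1}$ whenever $J'\not\subset J$.

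It then remains to bound this sum by a combinatorial estimate. Split according to whether $|J|$ and $|J'|$ are large.
\begin{itemize}
\item The diagonal terms $J=J'$ contribute $4^{-n}2^n=2^{-n}$.
\item Under the uniform measure, $|J|$ and $|J'|$ are Binomial$(n,1/2)$, so the probability that $\min(|J|,|J'|)<n/4$ is exponentially small.
\item Otherwise $N\ge 2^{n/4-1}$, and the term is at most $(1-2\varepsilon)^{2^{n/4-1}}$, which is super-exponentially small.
\end{itemize}
One case needs care: $J\subsetneq J'$. Then $N\ge 2^{|J'|-1}$ and only $|J'|$ matters, so it is still covered by the case split above. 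Altogether $E_\mu[(E^\beta[\zeta_n])^2]\le C\theta^n$ for some $\theta<1$. This is summable, so Borel--Cantelli applies directly, and Cesàro averaging of a null sequence finishes the argument.

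The main obstacle is the factorization step. I must check that independence of the blocks $\mathfrak b_K$ gives independence of $(\beta_{n,K})_K$ at fixed $n$, which it does, since arbitrary dependence is allowed only along $n$ within a block. I must also carefully justify the lower bound on the symmetric-difference count of down-sets. The rest is routine large-deviation bookkeeping.
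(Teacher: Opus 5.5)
Your proof is correct. It follows the paper's skeleton: second moment with an independent copy $\xi'$, cancellation down to $K\in A_n(\xi)\Delta A_n(\xi')$, factorization using independence of $(\beta_{n,K})_K$ at fixed $n$, and the ellipticity bound $(1-2\varepsilon)^{\#(A_n(\xi)\Delta A_n(\xi'))}$. Where you differ is the final counting step.

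The paper bounds the count below by $2^{B}$ with $B=\#\{i\le n:\xi_i=-1,\ \xi'_i=1\}\sim\mathrm{Bin}(n,1/4)$. It then needs Lemma~\ref{lem:subexponential} to show $(3/4)^n\sum_k\binom nk(1-2\varepsilon)^{2^k}\le a^n$. You instead use $N(J,J')\ge 2^{\min(|J|,|J'|)-1}$ for $J\ne J'$, treat the diagonal separately, and finish with a binomial tail split. This gives the explicit bound $2^{-n}+2e^{-n/8}+(1-2\varepsilon)^{2^{n/4-1}}$ with no auxiliary lemma, so it is slightly more elementary. The paper's choice, in exchange, reduces everything to a single random variable with a clean binomial law and needs no case distinction.

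Separating the diagonal also sidesteps a small slip in the paper. The inclusion $A_n(\xi)\cap A_n(\xi')^c\supset 2^{\{i:\xi_i=-1,\,\xi'_i=1\}}$ fails for $K=\emptyset$, which always lies in $A_n(\xi')$. So the paper's lower bound should read $2^{B}-1$, for instance giving $0$ rather than $1$ when $\xi=\xi'$. This is harmless, since it costs only a factor $(1-2\varepsilon)^{-1}$. Finally, your Chebyshev--Borel--Cantelli step and the paper's monotone-convergence argument on $\sum_n E^\beta[\zeta_n]^2$ are interchangeable.
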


\begin{proof}
In order to prove $\text{(A}_\beta\text{)}$, we prove the following stronger statement: 
\begin{equation}\label{eq:SC-A_beta}
\sum_{n=0}^\infty\E[E^\beta[\zeta_n]^2]<\infty. 
\end{equation}

Indeed, the above summability implies by monotone convergence that the random sum $\sum_{n=0}^\infty E^\beta[\zeta_n]^2$ is integrable, hence almost surely finite, which implies that $E^\beta[\zeta_n]$ converges to 0 almost surely, which in turn by Cesàro's lemma implies $\text{(A}_\beta\text{)}$ with $\rho_\beta=0$. 

To prove \eqref{eq:SC-A_beta}, let $\zeta'_n$ be a copy of $\zeta_n$ defined on the same probability space such that $\zeta_n$ and $\zeta'_n$ are independent, conditionally on $\beta=(\beta_{n,K})_{n\in\mathbb{N},\,K\subset\IeFer n}$. Then, we simply write $\E[E^\beta[\zeta_n]^2]=\E[E^\beta[\zeta_n\zeta'_n]]$. More precisely, we may consider that, up to enlarging $(\Omega,\mathcal F, \P)$ we have a random sequence $\xi'=(\xi'_n)_{n\ge1}$ that is an independent copy of $\xi$, which enables to define $\zeta'_n$ for all $n\ge0$ by $\zeta'_0=1$ and, for all $n\ge1$, 
\[\zeta'_n:=\prod_{K\subset\IeFer n}(\xi'_{[K]})^{\beta_{n,K}}.\]
The notation $P^\beta$ from now on denotes the conditional distribution of $(\zeta,\zeta')$ given $\beta$. 

Denoting by $A_n(\xi)=2^{\kappa_n(\xi)}$ the power set of $\kappa_n(\xi)$, we have, from the expression~\eqref{eq:expression_zeta} of $\zeta_n$ in the I.I.\ case,
\begin{align*}
    E^\beta[\zeta_n]^2 = E^\beta[\zeta_n\zeta'_n] = E^\beta[(-1)^{\sum_{K\in A_n(\xi)\Delta A_n(\xi')}\beta_{n,K}}],
\end{align*}
where $\Delta$ denotes the symmetric difference. 
Then, by Fubini's theorem, 
\begin{align}
    \E[E^\beta[\zeta_n]^2]
        & = \E\cro[\bigg]{\E\cro[\Big]{(-1)^{\sum_{K\in A_n(\xi)\Delta A_n(\xi')}\beta_{n,K}} \Big| \xi,\xi' }}.
\end{align}
Given $\xi,\xi'$, the exponent is a sum of independent Bernoulli r.v.'s. As a conclusion,
\begin{align*}
    \E[E^\beta[\zeta_n]^2]
        & = \E\cro[\bigg]{\prod_{K\in A_n(\xi)\Delta A_n(\xi')} (1-2p_{n,K})}
        \le \E\cro[\bigg]{\prod_{K\in A_n(\xi)\Delta A_n(\xi')} \abs{1-2p_{n,K}}}\\
        & \le \E\cro[\bigg]{(1-2\epsilon)^{\#\pa{A_n(\xi)\Delta A_n(\xi')}}} .
\end{align*}
Observe that, clearly, 
\begin{align*}
A_n(\xi)\Delta A_n(\xi') \supset A_n(\xi)\cap A_n(\xi')^c 
& = \{K\subset\IeFer n \st \forall i\in K,\ \xi_i=-1\text{ and }\exists j\in K, \xi'_j=1\}\\
& \supset\{K\subset\IeFer n\st \forall i\in K, (\xi_i=-1\text{ and } \xi'_i=1)\} =  2^{\{i\in\IeFer n\st \xi_i=-1,\ \xi'_i=1\}} 
\end{align*}
hence, with $z_i=\ind{(\xi_i=-1)}$ and similarly for the sequence $\xi'$,
\[\#\pa[\big]{A_n(\xi)\Delta A_n(\xi')}\ge 2^{\sum_{i=1}^n z_i(1-z'_i)}, \]
and the above sum is binomially distributed with parameters $n$ and $1/4$. Finally, 
\begin{align*}
    \E[E^\beta[\zeta_n]^2]
        & \le \sum_{k=0}^n {n\choose k}\pa[\bigg]{\frac14}^k\pa[\bigg]{\frac34}^{n-k}(1-2\epsilon)^{2^k}\\
        & \le \pa[\bigg]{\frac34}^n \sum_{k=0}^n {n\choose k}(1-2\epsilon)^{2^k} \le a^n
\end{align*}
where the last inequality holds for any fixed $a\in(\frac34,1)$, for large $n$, due to Lemma~\ref{lem:subexponential}. This bound ensures the summability of $\E[E^\beta[\zeta_n]^2]$, which concludes the proof. 
\end{proof}

\begin{proposition}[I.I.\ representation] Assume (G). \label{prop:B_quenched}
Then Condition $(B_\beta)$ holds for almost every $\beta$, with $\rho_\beta=0$.  
\end{proposition}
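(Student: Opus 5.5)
We need to show $\frac1{N^2}\sum_{m,n\le N}E^\beta[\zeta_{m-1}\zeta_{n-1}]\to0$ for almost every $\beta$. As in the proof of $\text{(A}_\beta\text{)}$, we will show a summable second-moment bound and then use Borel--Cantelli along a subsequence. The plan is to control the random variable
\[
S_N(\beta):=\frac1{N^2}\sum_{m,n=1}^N E^\beta[\zeta_{m-1}\zeta_{n-1}]=\frac1{N^2}E^\beta\Big[\Big(\sum_{n=0}^{N-1}\zeta_n\Big)^2\Big]\ge0 .
\]
It suffices to prove $\E[S_N]\le CN^{-\delta}$ for some $\delta>0$. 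Indeed, $S_N$ is nonnegative and, for $N_j\le N\le N_{j+1}$, one has $S_N\le (N_{j+1}/N_j)^2 S_{N_{j+1}}$. Taking $N_j=\lfloor \theta^j\rfloor$ with $\theta>1$, Borel--Cantelli (or simply $\sum_j\E[S_{N_j}]<\infty$) gives $\limsup S_N\le \theta^2\cdot 0$ almost surely.

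\textbf{Bounding $\E[S_N]$.} For $m<n$,
\[
\E[\zeta_m\zeta_n]=\E\Big[\E\Big[(-1)^{\sum_{K\subset\kappa_m(\xi)}\beta_{m,K}+\sum_{K\subset\kappa_n(\xi)}\beta_{n,K}}\,\Big|\,\xi\Big]\Big].
\]
Write the exponent as $\sum_K(\beta_{m,K}\ind{K\subset\kappa_m}+\beta_{n,K}\ind{K\subset\kappa_n})$. By (G), the blocks $\mathfrak b_K$ are independent across $K$, so given $\xi$ the expectation factorizes over $K$. Consider $K$ with $\max K\in(m,n]$ and $K\subset\kappa_n(\xi)$: such $K$ do not involve $\beta_{m,K}$, so their factor is $1-2p_{n,K}$, which has modulus at most $1-2\eps$. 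The other factors have modulus at most $1$. Every $K\subset\kappa_n(\xi)$ containing at least one index of $\kappa_n(\xi)\cap(m,n]$ qualifies. If $j:=\#(\kappa_n(\xi)\cap(m,n])\ge1$, this gives at least $2^{\#\kappa_n(\xi)}-2^{\#\kappa_n(\xi)-j}\ge 2^{j-1}$ such sets. Hence
\[
|\E[\zeta_m\zeta_n]|\le \E\big[(1-2\eps)^{2^{J-1}}\ind{J\ge1}\big]+\P(J=0),\qquad J\sim\mathrm{Bin}(n-m,1/2),
\]
which is bounded by $c\,a^{\,n-m}$ for some $a<1$. Actually the simpler bound $\E[(1-2\eps)^{2^{J-1}}]+2^{-(n-m)}$ already decays exponentially. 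Summing, $\E[N^2S_N]\le N+2\sum_{m<n<N}c\,a^{n-m}\le C N$, so $\E[S_N]\le C/N$, which is even summable along geometric subsequences.

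\textbf{Main obstacle.} The only delicate point is the factorization when $m<n$: the sets $K$ with $\max K\le m$ involve the \emph{dependent} pair $(\beta_{m,K},\beta_{n,K})$ within the same block $\mathfrak b_K$. The plan handles this by bounding those factors by $1$ in modulus, and by exploiting only sets $K$ with $\max K>m$, for which $\beta_{m,K}$ does not exist. One must check that the factorization across distinct $K$ is legitimate, which follows from the mutual independence of the $\mathfrak b_K$ in (G), together with the independence of $\xi$ from $\beta$. The conclusion is that $S_N\to0=\rho_\beta^2$ almost surely, consistent with $\rho_\beta=0$ from the previous proposition.
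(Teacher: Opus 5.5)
\paragraph{A false step.} The inequality $S_N\le (N_{j+1}/N_j)^2S_{N_{j+1}}$ for $N_j\le N\le N_{j+1}$ is false in general. It assumes that $N\mapsto N^2S_N=E^\beta[(\sum_{n<N}\zeta_n)^2]$ is nondecreasing, but the cross terms $E^\beta[\zeta_m\zeta_n]$ can be negative. For instance, take $\beta_{1,\emptyset}=1$ and $\beta_{1,\{1\}}=0$, an event of probability at least $\eps^2$ under (G). Then $\zeta_1\equiv-1$, so $N^2S_N$ equals $1$ at $N=1$ and $0$ at $N=2$.

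The repair is routine. By Minkowski's inequality in $L^2(P^\beta)$, for $N_j\le N\le N_{j+1}$,
\[
\sqrt{S_N}\le \frac{N_{j+1}}{N_j}\sqrt{S_{N_{j+1}}}+\frac{N_{j+1}-N_j}{N_j}.
\]
Take $N_j=j^2$. Then $\sum_j\E[S_{N_j}]\le C\sum_j j^{-2}<\infty$ and $(N_{j+1}-N_j)/N_j\to0$, so $S_N\to0$ almost surely. With geometric $N_j=\lfloor\theta^j\rfloor$ you only get $\limsup\sqrt{S_N}\le\theta-1$, and you must then let $\theta\downarrow1$ along a countable sequence.

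You should also justify the exponential decay of $\E[(1-2\eps)^{2^{J-1}}]$ rather than assert it. For example, Lemma~\ref{lem:subexponential} gives $2^{-(n-m)}e^{O(\log^2(n-m))}$. Alternatively, use a Chernoff bound on $\{J<(n-m)/4\}$ together with the doubly exponential bound on its complement.

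\paragraph{Comparison with the paper.} With this repair your proof is correct, and it takes a genuinely different route from the paper.

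The paper works with quenched second moments. It introduces an independent copy $\xi'$ of $\xi$ and bounds $\E[E^\beta[\zeta_m\zeta_n]^2]$ by $a^{|n-m|}$, via a $\mathrm{Bin}(n-m,1/4)$ count. Summing over $|m-n|\ge\sqrt n$, it deduces that almost surely each individual quenched correlation $E^\beta[\zeta_m\zeta_n]$ tends to $0$ off the band $|m-n|<\sqrt n$. It then discards the band by counting its $O(N^{3/2})$ terms.

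You instead use that the quenched (B)-sum $S_N(\beta)$ is itself a nonnegative $P^\beta$-second moment whose $\mu$-average is the annealed (B)-sum. So you only need annealed correlations $\E[\zeta_m\zeta_n]$: no replica, and a $\mathrm{Bin}(n-m,1/2)$ count. The price is the subsequence-and-interpolation step.

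What your route buys:
\begin{itemize}
\item It gives $(A_\beta)$ with $\rho_\beta=0$ for free, since $\bigl|\frac1N\sum_{n<N}E^\beta[\zeta_n]\bigr|\le\sqrt{S_N}$. In other words, you prove $(C_\beta)$ in $L^2(P^\beta)$ directly.
\item It only needs some polynomial annealed rate $\E[S_N]=O(N^{-\delta})$ (take $N_j=\lceil j^r\rceil$ with $r\delta>1$). The annealed bounds in Section~\ref{sec:II_1fMarkov} appear to be of this type. If so, the same argument would turn them into quenched statements, which the paper leaves open.
\end{itemize}

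What the paper's route buys in exchange is finer information: almost sure decay of the individual quenched correlations, not only of their Ces\`aro average.
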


\begin{proof}
    First, with the same notations as in the previous proof, for any integers $m, n$, 
\begin{align}
    \E[E^\beta[\zeta_m\zeta_n]^2]
        & = \E[E^\beta[\zeta_m\zeta_n]E^\beta[\zeta'_m\zeta'_n]]=\E[E^\beta[\zeta_m\zeta_n\zeta'_m\zeta'_n]]=\E\cro[\big]{\E[\zeta_m\zeta_n\zeta'_m\zeta'_n\s \xi,\xi']}
        \label{eq:zeta_produit}
\end{align}
where the last step is Fubini's theorem. 
Then we shall use identity~\eqref{eq:expression_zeta}. 

We have, for $m<n$, 
\begin{align*}
    \zeta_m\zeta_n
        & =(-1)^{\sum_{K\subset \kappa_m(\xi)}\beta_{m,K}+\sum_{K\subset \kappa_n(\xi)}\beta_{n,K}}\\
        & = (-1)^{\sum_{K\in A_m(\xi)}(\beta_{m,K}+\beta_{n,K})}(-1)^{\sum_{K\in A_{m,n}(\xi)}\beta_{n,K}}
\end{align*}
where 
\[
A_{m,n}(\xi)=2^{\kappa_n(\xi)}\cap(2^{\IeFer m})^c
\]
so that $A_n(\xi)=A_m(\xi)\uplus A_{m,n}(\xi)$.
By the previous computation and recalling the independence of the random variables $\beta_{n,K}$ across distinct sets $K$, for all $m<n$, 
\begin{align*}
    \E[E^\beta[\zeta_m\zeta_n]^2]
        & = \E\cro[\bigg]{\E\cro[\Big]{(-1)^{\sum_{K\in A_{m}(\xi)\Delta A_{m}(\xi')}(\beta_{m,K}+\beta_{n,K})}(-1)^{\sum_{K\in A_{m,n}(\xi)\Delta A_{m,n}(\xi')}\beta_{n,K}}\Big|\xi,\xi'}}\\
        & = \E\cro[\bigg]{\E\cro[\Big]{(-1)^{\sum_{K\in A_{m}(\xi)\Delta A_{m}(\xi')}(\beta_{m,K}+\beta_{n,K})}\Big|\xi,\xi'}\E\cro[\Big]{(-1)^{\sum_{K\in A_{m,n}(\xi)\Delta A_{m,n}(\xi')}\beta_{n,K}}\Big|\xi,\xi'}}\\
        & \le \E\cro[\bigg]{\abs[\bigg]{\E\cro[\Big]{(-1)^{\sum_{K\in A_{m}(\xi)\Delta A_{m}(\xi')}(\beta_{m,K}+\beta_{n,K})}\Big|\xi,\xi'}\E\cro[\Big]{(-1)^{\sum_{K\in A_{m,n}(\xi)\Delta A_{m,n}(\xi')}\beta_{n,K}}\Big|\xi,\xi'}}}\\
        & \le \E\cro[\bigg]{\E\cro[\Big]{1\Big|\xi,\xi'}\cdot\abs[\bigg]{\E\cro[\Big]{(-1)^{\sum_{K\in A_{m,n}(\xi)\Delta A_{m,n}(\xi')}\beta_{n,K}}\Big|\xi,\xi'}}}\\
        & = \E\cro[\bigg]{\abs[\bigg]{\prod_{K\in A_{m,n}(\xi)\Delta A_{m,n}(\xi')}(1-2p_{n,K})}}\\
        & \le \E\cro[\bigg]{(1-2\epsilon)^{\#\pa[]{A_{m,n}(\xi)\Delta A_{m,n}(\xi')}}}.
\end{align*}
We have the crude inclusions
\[
A_{m,n}(\xi)\Delta A_{m,n}(\xi')
    \supset A_{m,n}(\xi)\cap(A_{m,n}(\xi'))^c \supset 2^{\{i\in\IeFer n\setminus\IeFer m\st \xi_i=-1,\,\xi'_i=1\}},
\]
hence
\begin{align*}
    \#\pa[]{A_{m,n}(\xi)\Delta A_{m,n}(\xi')}
    & \ge 2^{\sum_{i=m+1}^n z_i(1-z'_i)},
\end{align*}
where as before $z_i=\ind{(\xi_i=-1)}$ and similarly for the sequence $\xi'$.
Again, the above sum is binomially distributed with parameters $n-m$ and $1/4$. Thus, 
\begin{align*}
    \E[E^\beta[\zeta_m\zeta_n]^2]
        & \le \sum_{k=0}^{n-m} {n-m\choose k}\pa[\bigg]{\frac14}^{\!\!k}\pa[\bigg]{\frac34}^{\!\!n-m-k}(1-2\epsilon)^{2^k}\\
        & \le \pa[\bigg]{\frac34}^{\!\!n-m} \sum_{k=0}^{n-m} {n-m\choose k}(1-2\epsilon)^{2^k} \le a^{n-m}
\end{align*}
where the last inequality holds for any fixed $a\in(\frac34,1)$, for large values of $n-m$, due to Lemma~\ref{lem:subexponential}. In particular, 
\begin{align*}
    \E\cro[\bigg]{\sum_{\substack{m,n\st\\\abs{m-n}\ge\sqrt n}}E^\beta[\zeta_m\zeta_n]^2}
        & = \sum_{\substack{m,n\st\\\abs{m-n}\ge\sqrt n}}\E\cro[\big]{E^\beta[\zeta_m\zeta_n]^2}
         \le \sum_{\substack{m,n\st\\\abs{m-n}\ge\sqrt n}} a^{\abs{n-m}}\\
        & \le \sum_{n=0}^\infty 2\sum_{k=\floor{\sqrt n}}^\infty a^k \le \frac 2{a(1-a)}\sum_{n=0}^\infty a^{\sqrt n}  <\infty,
\end{align*}
hence the sum in the first expectation is finite a.s., which implies that, for almost every $\beta$, $E^\beta[\zeta_m\zeta_n]$ goes to 0 as $m,n\to\infty$ with $\abs{m-n}\ge\sqrt n$. 

Let $\eps>0$. Almost surely, there exists $n_0$ such that, if $m,n\ge n_0$ and $\abs{n-m}\ge\sqrt n$, $\abs{E^\beta[\zeta_m\zeta_n]}<\eps$. For any $N>n_0$, we have, using $\abs{\zeta_m\zeta_n}\le 1$ for the first term, 
\begin{align*}
    &\abs[\bigg]{\frac1{N^2}\sum_{m=1}^N\sum_{n=1}^NE^\beta[\zeta_m\zeta_n]}\\
    & \le \frac1{N^2}\#\{(m,n)\st m,n\in\IeFer N, \text{and either }m,n\in\IeFer{n_0}\ \text{or}\ \abs{n-m}<\sqrt n\}
      +\frac1{N^2}\sum_{\substack{n_0\le m,n\le N\\\abs{n-m}>\sqrt n}}\eps\\
    & \le \frac1{N^2}\pa[\Big]{n_0^2+\sum_{n=n_0}^N2\sqrt n}+\eps = \frac1{N^2}O(N^{3/2})+\eps \le 2\eps,
\end{align*}
where the last inequality holds for large $N$. This shows that Condition $\text{(B}_\beta\text{)}$ holds for almost every~$\beta$, in the general case. 

\end{proof}

\section{I.I.\ representation, one-factor Markov case}\label{sec:II_1fMarkov}

In this section, we focus on a framework for which the family $(\mathfrak{b}_K)_K$ is no longer mutually independent. For each integer $k\in\N$, and with a slight abuse of notation, let $\mathfrak b_k = (\beta_{n,k})_{n\ge k\vee1}$ be a family of random variables, and consider the following assumption:

\begin{description}
    \item[(1fM)] The families $\mathfrak b_k$, $k\in\N$, are mutually independent, the random variables $\beta_{n,k}$ are Bernoulli random variables, and their parameters $p_{n,k}$ are elliptic, i.e.\ uniformly bounded away from 0 and 1: there exists $\epsilon\in(0,1/2)$ such that \[p_{n,k}\in[\epsilon,1-\epsilon]\] for all admissible $(n,k)$. And, for all $K\subset\IeFer n$, $\beta_{n,K}=\beta_{n,\abs K}$. 
\end{description}

This assumption is referred to as a \emph{one-factor Markov assumption}, in reference to the fact that the following two properties are equivalent:
\begin{enumerate}[(i)]
    \item for all $n$, there exists $f_n:\Z\to\Er$ such that, for all $x_1,\ldots,x_n\in\Er$,\ $\phi_n(x_1,\ldots,x_n)=f_n(x_1+\cdots+x_n)$,
\textit{i.e.}, a.s.,\ \[\eta_{n+1}=f_n(X_n)\xi_{n+1}.\]
    \item for all $n$, for all $K,L\subset\IeFer n$, $\abs{K}=\abs L\Rightarrow \beta_{n,K}=\beta_{n,L}$.
\end{enumerate}
As such, $((X_n,Y_n))_n$ is a Markov process, moreover $\eta_{n+1}=Y_{n+1}-Y_n$ depends on $(X_n,Y_n)$ through only one factor, $X_n$.

This equivalence is a direct consequence of the uniqueness of the representation and of the simple fact that any symmetric function of $(x_1,\ldots,x_n)\in\Er^n$ is a function of $(n,x_1+\cdots+x_n)$. 

Let us first emphasize in the following lemma that any attempt to reproduce the same strategy as previously is bound to fail in this case.
\begin{lemma}
    Assume (1fM) and furthermore that, for all $n\ge0$ and $0\le k\le n$, $p_{n,k}=p$ for some $p\in(0,1)$. 
    
    If $p\le\frac12$,  then  $\displaystyle\sum_n\E[E^\beta[\zeta_n]^2]=\infty$. 
\end{lemma}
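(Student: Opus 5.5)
The plan is to compute $\E[E^\beta[\zeta_n]^2]$ explicitly with the duplication trick from the proof of the first proposition of Section~\ref{sec:general}. Let $\xi'$ be an independent copy of $\xi$, also independent of $\beta$, and let $\zeta'_n$ be built from $\xi'$ and the same $\beta$. Then $\E[E^\beta[\zeta_n]^2]=\E\cro[\big]{\E[\zeta_n\zeta'_n\s\xi,\xi']}$ by Fubini's theorem. The idea is to show that the conditional expectation is nonnegative for every $(\xi,\xi')$ and equals $1$ on an event whose probability is of order $n^{-1/2}$.

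First, rewrite $\zeta_n$ under (1fM). Set $s=\abs{\kappa_n(\xi)}$. Grouping the subsets $K\subset\kappa_n(\xi)$ by their size $j=\abs K$ in \eqref{eq:expression_zeta}, and using $\beta_{n,K}=\beta_{n,\abs K}$, gives
\[
\zeta_n=(-1)^{\sum_{j=0}^{s}{s\choose j}\beta_{n,j}}.
\]
The same formula with $s'=\abs{\kappa_n(\xi')}$ holds for $\zeta'_n$.

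Second, condition on $(\xi,\xi')$. The variables $\beta_{n,0},\ldots,\beta_{n,n}$ belong to different blocks $\mathfrak b_j$, so they are independent Bernoulli$(p)$ variables. Hence
\[
\E[\zeta_n\zeta'_n\s\xi,\xi']=\prod_{j=0}^{n}(1-2p)^{\ind{\{{s\choose j}+{s'\choose j}\text{ odd}\}}}.
\]
Here ${s\choose j}=0$ for $j>s$, and a factor with even exponent equals $1$, including when $p=\frac12$. Since $p\le\frac12$, every factor lies in $[0,1]$, so the conditional expectation is nonnegative. Nonnegativity is exactly where the hypothesis $p\le\frac12$ is used: for $p>\frac12$ the signs could produce cancellations.

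Third, on the event $\{s=s'\}$ every exponent ${s\choose j}+{s'\choose j}=2{s\choose j}$ is even, so the product equals $1$. Combining this with nonnegativity gives
\[
\E[E^\beta[\zeta_n]^2]\ge\P(s=s')=\sum_{k=0}^n{n\choose k}^2 4^{-n}={2n\choose n}4^{-n}\sim\frac1{\sqrt{\pi n}},
\]
where we used that $s$ and $s'$ are independent and Binomial$(n,\frac12)$, and then Vandermonde's identity and Stirling's formula. This lower bound is not summable in $n$, which proves the lemma. I do not expect a real obstacle; the only point needing care is checking that the conditional product is nonnegative, so that discarding the complement of $\{s=s'\}$ only lowers the expectation.
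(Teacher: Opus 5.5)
Your proof is correct and follows the paper's argument essentially step for step. You use the independent copy $\xi'$, the product formula with the parity of ${s\choose j}+{s'\choose j}$, nonnegativity of each factor from $p\le\frac12$, and the lower bound $\P(s=s')$. The only difference is the final divergence step: you compute $\P(s=s')={2n\choose n}4^{-n}\sim(\pi n)^{-1/2}$ exactly, whereas the paper argues via infinitely many intersections of two independent walks or via the Cauchy--Schwarz bound $\P(S_n=S'_n)\ge\frac1{n+1}$.
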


\begin{proof}
Due to the assumption, 
\begin{align*}
\zeta_n
    & = (-1)^{\sum_{K\subset\{i\le n\st z_i=1\}}\beta_{n,K}}=(-1)^{\sum_{k=0}^{S_n}{S_n\choose k}\beta_{n,k}}=(-1)^{\sum_{k=0}^{n}{S_n\choose k}\beta_{n,k}}
\end{align*}
where $S_n=|\kappa_n(\xi)|=z_1+\cdots+z_n$ (and it is understood as usual that ${S_n\choose k}$ is zero as soon as $k>S_n$). Recall that $z_i=(1-\xi_i)/2$. Then, as before, 
\begin{align*}
    \E[E^\beta[\zeta_n]^2]
        & = \E[E^\beta[\zeta_n\zeta'_n]]
         = \E[\E[\zeta_n\zeta'_n\s \xi,\xi']]\\
         & = \E[\E[(-1)^{\sum_{k=0}^n ({S_n\choose k}+{S'_n\choose k})\beta_{n,k}}\s \xi,\xi']]\\
         & = \E[\E[(-1)^{\sum_{k=0}^n\ind{({S_n\choose k}+{S'_n\choose k}\text{ is odd})} \beta_{n,k}}\s \xi,\xi']]\\
         & = \E[(1-2p)^{\#\{k\le n\st {S_n\choose k}+{S'_n\choose k}\text{ is odd}\}}]
\end{align*}
Note in particular that, when $S_n=S'_n$, the sum ${S_n\choose k}+{S'_n\choose k}$ is obviously even, so that the term in the expectation is $(1-2p)^0=1$. 

Let us now assume that $p\le\frac12$ so that terms are nonnegative. We conclude that
\[
\E[E^\beta[\zeta_n]^2]
    \ge \P(S_n=S'_n).
\]
However, $\sum_n\P(S_n=S'_n)=\infty$, as can be seen in several ways. For instance, this is $\E[\#\{k\st S_k=S'_k\}]$, the expected number of intersections; however there are a.s.\ infinitely many intersections between two independent symmetric simple random walks on $\mathbb Z$. Or $\P(S_n=S'_n)\ge\frac1{n+1}$, as is the case for any distribution on a set of cardinality $n+1$ (by Cauchy-Schwarz inequality, $\sum_{k\le n} p_k^2 = \frac1{n+1}\sum_{k\le n}p_k^2\sum_{k\le n} 1^2\ge \frac1{n+1}(\sum_{k\le n} p_k)^2=\frac1{n+1}$). This concludes the proof. 
\end{proof}

However, a different strategy enables us to obtain annealed convergence results. Let us first consider the proof of Condition~(A), while Condition~(B) will be proved in later subsections under various assumptions. 

\subsection{Condition~(A)}

Let us consider both annealed and quenched settings for Condition (A). 

\begin{proposition}[I.I.\ representation]\label{prop:A_markov}\ 

a) Assume (1fM). Then  (A) holds. 

b) Assume (1fM) \textbf{and} that for all $k$, $(\beta_{n,k})_{n\ge k}$ are independent. Then $\Abeta$ holds for a.e.\ $\beta$. 
\end{proposition}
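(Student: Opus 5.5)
The plan is to exploit the explicit form of $\zeta_n$ under (1fM), as in the proof of the previous lemma. Writing $S_n=\abs{\kappa_n(\xi)}=z_1+\cdots+z_n$, we have
\[
\zeta_n=(-1)^{\sum_{k=0}^n{S_n\choose k}\beta_{n,k}}=(-1)^{\sum_{k\in O(S_n)}\beta_{n,k}},\qquad O(s):=\acc{k\le s\st {s\choose k}\text{ odd}}.
\]
By Lucas' theorem, $\#O(s)=2^{\sigma(s)}$, where $\sigma(s)$ is the number of ones in the binary expansion of $s$. For a), I would condition on $\xi$ and use the independence of the families $\mathfrak b_k$ across $k$; only the single variable $\beta_{n,k}$ from each family enters. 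This gives
\[
\abs{\E[\zeta_n]}=\abs[\Big]{\E\cro[\Big]{\prod_{k\in O(S_n)}(1-2p_{n,k})}}\le \E\cro[\big]{(1-2\epsilon)^{2^{\sigma(S_n)}}}.
\]
It then suffices to show that $\E[\zeta_n]\to0$, since Cesàro's lemma then yields (A) with $\rho=0$.

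The key step, and the main obstacle, is to show that $\sigma(S_n)\to\infty$ in probability, where $S_n\sim\mathrm{Bin}(n,1/2)$. I would first fix $M$ and set $L=\floor{\log_2 n}+1$. The integers in $\IeFer{0,2^L}$ with at most $M$ binary ones number at most $\sum_{j\le M}{L\choose j}\le (L+1)^M=O((\log n)^M)$. The binomial point probabilities satisfy $\max_s\P(S_n=s)\le C/\sqrt n$, either by Stirling or by the local CLT. Hence
\[
\P(\sigma(S_n)\le M)\le C(\log n+2)^M/\sqrt n\to0 .
\]
Splitting the expectation according to whether $\sigma(S_n)\le M$ then gives
\[
\limsup_n\abs{\E[\zeta_n]}\le(1-2\epsilon)^{2^M}
\]
for every $M$. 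So $\E[\zeta_n]\to0$, which completes a).

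For b), under the extra assumption all the $\beta_{n,k}$ are independent, so the quantities $E^\beta[\zeta_n]$, each a function of $(\beta_{n,k})_{k\le n}$ only, are independent across $n$. They are bounded by $1$ and their means are $\E[\zeta_n]$. Kolmogorov's strong law for independent variables with $\sum_n \mathrm{Var}/n^2<\infty$ then gives, almost surely,
\[
\frac1N\sum_{n=1}^N\pa[\big]{E^\beta[\zeta_{n-1}]-\E[\zeta_{n-1}]}\to0 .
\]
Combining this with the Cesàro convergence $\frac1N\sum_{n=1}^N\E[\zeta_{n-1}]\to0$ from part a) yields $\Abeta$ with $\rho_\beta=0$ for almost every $\beta$.
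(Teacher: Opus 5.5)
Your proof is correct, but it departs from the paper's at two points. For the decay of $\E[\zeta_n]$, the paper couples the lowest $\ell_n=\floor{\frac{1-\alpha}2\log_2 n}$ bits of $S_n$ with a uniform variable (Lemma~\ref{lem:coupling}, a discrete Fourier/total-variation estimate) and then uses Lemma~\ref{lem:subexponential}, which gives the explicit rate $\abs{\E[\zeta_n]}\le n^{-\frac{1-\alpha}2+o(1)}$. You replace this with a soft counting and anti-concentration argument: at most $(L+1)^M$ integers below $2^L$ have at most $M$ binary ones, and binomial point masses are $O(n^{-1/2})$. As written this only yields $\E[\zeta_n]\to0$, although taking $M$ of order $\log\log n$ in your own bound would even give $n^{-1/2+o(1)}$. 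The more substantial difference is in b). The paper applies Kronecker's lemma and the three-series theorem to $\sum_n E^\beta[\zeta_n]/n$; the expectation condition there requires convergence of $\sum_n\E[\zeta_n]/n$, hence a rate of decay. You instead apply Kolmogorov's strong law to the bounded, independent, centred variables $E^\beta[\zeta_n]-\E[\zeta_n]$. Then $\text{(A}_\beta\text{)}$ for a.e.\ $\beta$ follows from the mere Ces\`aro convergence of the means, i.e.\ from a), with no rate needed. Your version is more elementary and makes explicit that, under independence across $n$, the quenched statement follows directly from the annealed one. It also suffices for the later uses in Propositions~\ref{prop:condBMarkov} and~\ref{prop:condBMarkovdep}, which only need $\E[\abs{1-2\epsilon}^{2^{N_n}}]\to0$ (or the same with $N_n-1$). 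The paper's route, in exchange, gives an explicit polynomial decay and the structural fact that the low-order bits of $S_n$ are nearly uniform. Finally, the paper deduces a) from b) by noting that (A) depends only on the marginal laws of the $\beta_{n,k}$, whereas you prove a) directly; both are valid.
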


\begin{proof}
Let us first remark that proving b) implies a proof of a) even though the assumption for b) is a priori stronger. This is a consequence of the fact that Condition (A) depends only on the sequence $(\E[\zeta_n])_n$. Thus, the dependence structure of the variables $\beta_{n,k}$ across different values of $n$ is irrelevant to (A), and we may assume, without loss of generality, that they are independent across $n$. This independence across $n$, together with (1fM), is equivalent to the mutual independence of the random variables $\beta_{n,k}$, $n\in\N$, $k\in\N\cup\{0\}$.

In order to prove $\Abeta$, we shall use the fact that, by Kronecker's lemma \cite{Durrett}, for any $\beta$,
\[\Abeta\quad\Leftarrow\quad \sum_{n=1}^\infty\frac{E^\beta[\zeta_n]}n\text{ converges.}\]
For any $n\ge1$, the random variable $U_n:=E^\beta[\zeta_n]$ depends on $(\beta_{n,k})_{k\le n}$ only, hence $U_1$, $U_2$,... are independent. Furthermore, $\abs{U_n}\le1$ for all $n$. Therefore Kolmogorov's three-series theorem~\cite{Durrett} applies to $\sum_nU_n/n$. Both the ``tail'' and ``variance'' conditions of this theorem are in fact made trivial by the boundedness of $U_n$ so that it finally boils down to the ``expectation'' condition: 
\[\sum_{n=1}^\infty \frac{U_n}n\text{ converges a.s.}\quad\Leftrightarrow\quad \sum_{n=1}^\infty\frac{\mathbb E[U_n]}n\text{ converges.}\]
Since $\E[U_n]=\E[\zeta_n]$, we conclude that, to prove $\Abeta$, it suffices to prove the convergence of the series $\sum_n\frac1n{\E[\zeta_n]}$. 

Let us procede with the computation of $\E[\zeta_n]$. In the 1-factor Markov case, 
\begin{align*}
\zeta_n
    & = (-1)^{\sum_{K\subset\{i\le n\st z_i=1\}}\beta_{n,K}}=(-1)^{\sum_{k=0}^{S_n}{S_n\choose k}\beta_{n,k}}=(-1)^{\sum_{k=0}^{n}{S_n\choose k}\beta_{n,k}}
\end{align*}
where $S_n=z_1+\cdots+z_n$ (and it is understood as usual that ${S_n\choose k}$ is zero as soon as $k>S_n$). Then, as before, 
\begin{align}
\E[\zeta_n]
	& = \E[\E[\zeta_n\s \xi]]\notag\\
	& = \E\cro[\bigg]{\E[(-1)^{\sum_{k=0}^n\beta_{n,k}\ind{\{{S_n\choose k}\text{ is odd}\}}}\s \xi]}\notag\\
    & = \E\cro[\bigg]{\prod_{k\le n\st {S_n\choose k}\text{ is odd}}(1-2p_{n,k})}\notag
\end{align}
hence by triangle inequality and uniform bound on $p_{n,k}$,
\begin{align}
\abs[\big]{\E[\zeta_n]}
	& \le \E[(1-2\epsilon)^{\#\{k\le n\st {S_n\choose k}\text{ is odd}\}}].\label{eq:odd}
\end{align}
By Lucas theorem~\cite{Lucas}, for nonnegative integers $S$ and $k$, ${S\choose k}$ is odd if, and only if, in the binary representations of $k$ and $S$, the locations of the 1's for $k$ is a subset of the locations of the 1's for $S$. As a consequence, a.s., 
\begin{equation}\label{eq:bitcount}
\{k\le n\st {S_n\choose k}\text{ is odd}\} = 2^{N_n}
\end{equation}
where $N_n$ is the bit count of $S_n$, i.e.\ the number of ones in the binary representation of $S_n$. Although the distribution of the bits of $S_n$ varies widely with $n$ since $S_n$ concentrates around $n/2$, we shall argue that, because this concentration holds in a $\sqrt{n}$-window, the lower half of the bits of $S_n$ (with some margin) are almost uniform. This is made precise in Lemma~\ref{lem:coupling} in the appendix. 

More specifically, choose an arbitrary $\alpha\in(0,1)$. Denote by $\Nt_n$ the number of ones among the $\ell_n:=\floor{\frac{1-\alpha}2\log_2 n}$ lower bits of $S_n$, by $U_n$ the uniform r.v.\ obtained by Lemma~\ref{lem:coupling} (with this value of $\alpha$, and $S_0=0$) and by $K_n$ the bit count of $U_n$, so that $\Nt_n=K_n$ on $\{S_n\equiv U_n\modulo2^{\ell_n}\}$ and
\[\P(S_n \not\equiv U_n\modulo2^{\ell_n})\le 2e^{-\pi^2 n^\alpha}.\]

Since $U_n$ is uniformly distributed on $\{0,\ldots,2^{\ell_n}-1\}$, its last $\ell_n$ bits are independent Bernoulli r.v.'s of parameter $1/2$, so that $K_n$ is a binomial r.v.\ of parameters $\ell_n$ and $1/2$. 

One has, combining~\eqref{eq:odd}, \eqref{eq:bitcount} and Lemma~\ref{lem:coupling}, 
\begin{align}
	\abs[\big]{\E[\zeta_n]}  & \le \abs[\big]{\E[(1-2\epsilon)^{2^{N_n}}]}\notag\\
	& \le \E[\abs{1-2\epsilon}^{2^{N_n}}]\notag\\ 
	& \le \E[\abs{1-2\epsilon}^{2^{\Nt_n}}]\notag\\
	&  \le \P(S_n\not\equiv U_n\modulo 2^{\ell_n})+\E[\abs{1-2\epsilon}^{2^{K_n}}]\notag\\
	& \le 2e^{-\pi^2n^{\alpha}}+2^{-\ell_n}\sum_{k=0}^{\ell_n}{\ell_n\choose k}\abs{1-2\epsilon}^{2^k}\notag.\\
    & \le 2e^{-\pi^2n^\alpha}+n^{-\frac{1-\alpha}2}e^{O((\log\ell_n)^2)} = n^{-\frac{1-\alpha}2+o(1)}\label{eq:boundNn}
\end{align}
using Lemma~\ref{lem:subexponential} for the last inequality, 
since $(\log\ell_n)^2=O((\log\log n)^2)=o(\log n)$. As a consequence, 
the series $\sum_n\frac1n{\E[\zeta_n]}$ converges, proving $\Abeta$ for almost every $\beta$ by the initial discussion. 
\end{proof}

\subsection{$n$-independence case} Note that Proposition~\ref{prop:A_markov}, Part a), does not involve the dependence of the sequence $(\phi_n)_n$ as $n$ varies. Let us first assume independence, i.e.\ each new term $\phi_{n+1}$ involves a ``new sampling'' of the coefficients.  

\begin{proposition}[I.I.\ representation]\label{prop:condBMarkov}
Assume (1fM) \textbf{and} that for each $k$, $(\beta_{n,k})_{n\ge k}$ are independent. 
Then Condition (B) holds.
\end{proposition}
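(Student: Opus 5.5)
Condition (B) here is the annealed one: with $\rho=0$ (already obtained from Proposition~\ref{prop:A_markov}), we must show
\[
\frac1{N^2}\sum_{m,n\le N}\E[\zeta_m\zeta_n]\to0 .
\]
Since $\abs{\zeta_m\zeta_n}\le1$ and the diagonal contributes $O(N)$, it suffices to show that $\E[\zeta_m\zeta_n]\to0$ uniformly enough for $m<n$ with $n-m$ large, or at least on average. The plan is therefore to bound $\abs{\E[\zeta_m\zeta_n]}$ for $m<n$, using the independence of the $\beta_{n,k}$ across both $n$ and $k$.

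First, write $\zeta_m\zeta_n=(-1)^{\sum_k\binom{S_m}k\beta_{m,k}+\sum_k\binom{S_n}k\beta_{n,k}}$. For $m<n$ the variables $(\beta_{m,k})_k$ and $(\beta_{n,k})_k$ are independent blocks. Conditioning on $\xi$ gives
\[
\E[\zeta_m\zeta_n\s\xi]=\prod_{k:\binom{S_m}k\text{ odd}}(1-2p_{m,k})\prod_{k:\binom{S_n}k\text{ odd}}(1-2p_{n,k}).
\]
We bound the first factor by $1$ in absolute value, and the second by $(1-2\epsilon)^{2^{N_n}}$, using \eqref{eq:bitcount}. This yields
\[
\abs{\E[\zeta_m\zeta_n]}\le\E[(1-2\epsilon)^{2^{N_n}}],
\]
which by \eqref{eq:boundNn} is at most $n^{-\frac{1-\alpha}2+o(1)}$, uniformly in $m<n$.

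There is no obstacle from correlation between $S_m$ and $S_n$, because after conditioning on $\xi$ the $m$-factor is simply dropped. Summing over pairs,
\[
\frac1{N^2}\sum_{m,n\le N}\abs{\E[\zeta_m\zeta_n]}\le\frac1{N^2}\Bigl(N+2\sum_{n\le N}n\cdot n^{-\frac{1-\alpha}2+o(1)}\Bigr)=O\bigl(N^{-\frac{1-\alpha}2+o(1)}\bigr)\to0 .
\]
This gives (B) with $\rho^2=0$.

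The only delicate point is justifying the factorization of the conditional expectation. It requires $\beta_{m,\cdot}$ and $\beta_{n,\cdot}$ to be independent for $m\neq n$, which is exactly the added assumption together with (1fM) (mutual independence across $k$ and across $n$). It also requires that $\xi$ be independent of $\beta$, which holds by the setting. The bound from Lemma~\ref{lem:coupling} and Lemma~\ref{lem:subexponential} was already established in the proof of Proposition~\ref{prop:A_markov}, so it can be reused verbatim.
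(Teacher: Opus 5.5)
Your proof is correct and is essentially the paper's argument. Like the paper, you condition on $\xi$, use the independence of the blocks $(\beta_{m,k})_k$ and $(\beta_{n,k})_k$ to factor $\E[\zeta_m\zeta_n\s\xi]$, drop the $m$-factor, and bound what remains by $\E[\abs{1-2\epsilon}^{2^{N_n}}]$ via~\eqref{eq:boundNn}. The only cosmetic difference is that you sum the explicit rate $n^{-\frac{1-\alpha}2+o(1)}$ directly, whereas the paper only uses that this bound tends to $0$ together with Cesàro's lemma.
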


\begin{corollary}[I.I.\ representation]\label{cor:Markov} Assume (1fM) \textbf{and} that for each $k$, $(\beta_{n,k})_{n\ge k}$ are independent. Then convergence to a standard Brownian motion holds under the annealed measure $\P$: ``the random random walk is normal''. 
\end{corollary}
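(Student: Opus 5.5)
The corollary follows from the annealed version of Theorem~\ref{mainth}, which requires the unconditional Conditions (A) and (B) with a common value of $\rho$. Under (1fM), Proposition~\ref{prop:A_markov}~a) gives (A). The bound~\eqref{eq:boundNn} shows $\E[\zeta_n]\to0$, so by Cesàro the limit in (A) is $\rho=0$. Under the additional assumption that each $(\beta_{n,k})_{n\ge k}$ consists of independent variables, Proposition~\ref{prop:condBMarkov} gives (B). So the plan is to check that (B) is stated with limit $\rho^2=0$, and then invoke the annealed Theorem~\ref{mainth}. Recall that this annealed version is valid because $(X_n,Y_n)$ is a $\P$-martingale, as noted in Section~\ref{sec:setting}.

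The only point needing care is making sure (B) holds with limit $0$, i.e.\ $\frac1{N^2}\sum_{m,n\le N}\E[\zeta_m\zeta_n]\to0$. If Proposition~\ref{prop:condBMarkov} is read as "(B) with the $\rho$ from (A)", this is immediate. If one wants a self-contained justification, I would proceed as follows. For $m<n$, independence of $\mathfrak b_n=(\beta_{n,k})_k$ from $(\beta_{m,k})_k$ lets us condition on $\xi$ and on $\beta_{m,\cdot}$. This gives
\[
\abs{\E[\zeta_m\zeta_n]}\le \E\Bigl[(1-2\epsilon)^{2^{N_n}}\Bigr],
\]
exactly as in~\eqref{eq:odd}--\eqref{eq:bitcount}. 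Then~\eqref{eq:boundNn} yields $\abs{\E[\zeta_m\zeta_n]}\le n^{-\frac{1-\alpha}2+o(1)}$ uniformly in $m<n$. Averaging over $m,n\le N$ then gives $O(N^{-\frac{1-\alpha}2+o(1)})\to0$; the diagonal terms contribute only $O(1/N)$. The main (mild) obstacle is this conditioning step: one must note that, given $\xi$, the factor $\zeta_n$ depends only on the fresh block $(\beta_{n,k})_k$, which is independent of $\zeta_m$.

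With (A) and (B) holding at $\rho=0$, Theorem~\ref{mainth} under $\P$ gives weak convergence of $W^{(n)}$ to a two-dimensional Brownian motion with correlation $0$. Since both coordinates are normalized simple random walks, this limit is a standard planar Brownian motion, which proves the corollary.
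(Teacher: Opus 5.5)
Your proposal is correct and follows the paper's route. You get (A) with $\rho=0$ from Proposition~\ref{prop:A_markov} and the decay of $\E[\zeta_n]$ in~\eqref{eq:boundNn}, and (B) from Proposition~\ref{prop:condBMarkov} via $\abs{\E[\zeta_m\zeta_n]}\le\E[(1-2\epsilon)^{2^{N_n}}]$, obtained by conditioning on $\xi$ and using the independence of the level-$n$ block; you then conclude with the annealed Theorem~\ref{mainth}. Your self-contained treatment of (B) is only a slightly more explicit, rate-giving form of the paper's Cesàro argument.
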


Remark. In the 1-factor Markov case, note that $\E[\zeta_n]$ was shown to decrease polynomially to 0, as opposed to the exponential convergence in the ``general dependence case''. This can be intuitively understood by the much more limited addition of randomness: only $n+1$ random parameters define the random walk $Y$ at step $n$, compared to $2^n$, making the prevalence of normality more subtle. This subtlety also echoes the fact that there are examples of environments $\beta$ where $(B_\beta)$ \textit{fails}, as was proved in~\cite{CHSW} (Lévy transformation). We only gave annealed results, and these remarks suggest that a quenched result (similar to Proposition~\ref{prop:B_quenched}) would be sensibly more difficult. 

\begin{proof}[Proof of Proposition~\ref{prop:condBMarkov}]
It turns out that (B) actually quickly follows from the computations performed to prove~(A). 

Let $N\in\N$. Let $m,n$ be integers such that $1\le m<n\le N$. 
Using the same reasoning as in~\eqref{eq:odd} and~\eqref{eq:bitcount}, together with the independence between $(\beta_{m,k})_{k\in\IeFer m}$ and $(\beta_{n,k})_{k\in\IeFer n}$, we have
\begin{align*}
\abs[\big]{\E[\zeta_m\zeta_n]}
	& = \abs[\Big]{\E\cro[\Big]{\E[\zeta_m\s \xi]\E[\zeta_n\s \xi]}}
	 \le\E[(1-2\epsilon)^{2^{N_m}+2^{N_n}}]
\end{align*}
hence
\begin{align*}
\abs[\big]{\E[\zeta_m\zeta_n]}
	 \le  \E[\abs{1-2\epsilon}^{2^{N_n}}]=:a_n,
\end{align*}
and $a_n$ is precisely the quantity that what bounded in the proof of~Proposition~\ref{prop:A_markov}. In particular, we deduce from that proof that $a_n\to0$ as $n\to\infty$. Thus, 
\begin{align*}
	\abs[\bigg]{\frac1{N^2}\sum_{1\le m<n\le N}\E[\zeta_m\zeta_n]}
	& \le \frac1{N^2}\sum_{1\le m<n\le N}a_n \le \frac1N\sum_{n=1}^N a_n\limites{}{N\to\infty}0
\end{align*}
by Cesàro's lemma, which implies (B). Note indeed that the diagonal terms ($m=n$) are all equal to~1 and thus account for a total of $N = o(N^2)$. 
\end{proof}

The previous proof suggests that Condition (B) is sometimes, paradoxically, almost a consequence of (A), in spite of the usual increase of both complexity and constraint when moving up from expectation to variance. Let us indeed state such a remarkable result, which especially owes its simple form to the restriction to weight parameters smaller than~$1/2$. In practice, as in the previous proof, not only Condition (A) but a slightly stronger result is typically obtained, which allows the result to carry over to larger parameters. We also state such a result in part b) below. 

Furthermore, the one-factor Markov model under investigation in this section is characterized by a grouping according to cardinality ($\beta_{n,K}=\beta_{n,L}$ whenever $|K|=|L|$). However, this setting is by no means a requirement to achieve convergence to a standard Brownian motion. In fact the grouping can be generalized to an arazbitrary partition of $2^{\IeFer n}$. More precisely, for each $n$, let
\[
\psi_n:2^{\IeFer n}\longrightarrow R_n
\]
be a surjective map onto a finite index set $R_n$. We assume that the weights are constant within the groups induced by $\psi_n$, so that
\begin{equation}\label{eq:psigrouping}
\psi_n(K)=\psi_n(L)
\quad\Longrightarrow\quad
\beta_{n,K}=\beta_{n,L},
\qquad K,L\subset\IeFer n.   
\end{equation}

$\beta_{n,k}$ will denote the common $\beta_{n,\cdot}$-value on the fibre
$\psi_n^{-1}(\{k\})$, $k\in R_n$. This general setting allows for groupings as fine as the discrete partition ($\psi_n$ is a bijective map -- this is the independent case) and as corse as the trivial partition ($\psi_n$ is constant). The 1-factor Markov case corresponds to the map $\psi_n:K\mapsto \abs K$.

Note that it may not be possible or natural to identify different $k$'s across different $n$'s and it may therefore not be possible to define a family $\mathfrak b_k$, $k\in\N\cup\{0\}$, as in the one-factor Markov case.

\begin{proposition}[I.I.\ representation]
Assume the sequence $(\beta_{n,K})_{n\in\N,\,K\subset\IeFer n}$ is independent in $n$ (i.e.\ the family $(\mathfrak{b}_n)_n$ is mutually independent, where $\mathfrak b_n=(\beta_{n,K})_{K\subset\IeFer n}$), and that the dependence in $K$ is such that for all $n$, the weights $(\beta_{n,K})_{K\subset\IeFer n}$ are constant within the groups induced by a map $\psi_n:2^{\IeFer n}\rightarrow R_n$. See \eqref{eq:psigrouping}.  
Furthermore, assume that the random values $\beta_{n,k}$, for $k\in R_n$, defined by $\beta_{n,k}=\beta_{n,K}$ for any $K\in\psi_n^{-1}(\{k\})$, are independent and Bernoulli distributed with respective parameters $p_{n,k}$, $k\in R_n$. 
\begin{enumerate}[a)]
    \item If, furthermore, $p_{n,k}\le 1/2$ for all $n,k$, then Condition (A) implies Condition (B).
    \item If Condition ($\widetilde A$): $\E[\zeta_n]\limites{}{n\to\infty}0$ (which is stronger that (A)) holds for any parameters $p_{n,k}\in[\epsilon,1/2]$ for some $\epsilon\in[0,1/2]$, then both conditions (A) and (B) hold for any parameters $p_{n,k}\in[\epsilon,1-\epsilon]$. 
\end{enumerate}
\end{proposition}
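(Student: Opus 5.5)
The plan is to mimic the proof of Proposition~\ref{prop:condBMarkov}, using independence across $n$ to factor the conditional expectation given $\xi$.

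\textbf{Setup.} For fixed $\xi$, write $c_{n,k}(\xi)=\#\{K\in\psi_n^{-1}(\{k\})\st K\subset\kappa_n(\xi)\}$. By \eqref{eq:expression_zeta}, $\zeta_n=(-1)^{\sum_{k\in R_n}c_{n,k}(\xi)\beta_{n,k}}$, so only the parity of $c_{n,k}$ matters. Set $O_n(\xi)=\{k\in R_n\st c_{n,k}(\xi)\text{ odd}\}$. Independence of the $\beta_{n,k}$ gives
\[
h_n(\xi):=\E[\zeta_n\s\xi]=\prod_{k\in O_n(\xi)}(1-2p_{n,k}).
\]
Since $\beta$ is independent across $n$ and independent of $\xi$, for $m<n$ we get $\E[\zeta_m\zeta_n\s\xi]=h_m(\xi)h_n(\xi)$. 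Hence
\[
\E[\zeta_m\zeta_n]=\E[h_m(\xi)h_n(\xi)].
\]
This factorisation is the key step. The diagonal terms contribute only $N=o(N^2)$ to (B), so only the off-diagonal terms need control.

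\textbf{Part a).} Assume $p_{n,k}\le1/2$. Then $0\le h_n\le1$, so
\[
0\le\E[\zeta_m\zeta_n]\le\E[h_n(\xi)]=\E[\zeta_n].
\]
Under (A) with limit $\rho$, we also have $\E[\zeta_n]\ge0$, hence $\rho\ge0$. This bound alone gives $\frac1{N^2}\sum_{m<n\le N}\E[\zeta_m\zeta_n]\le\frac1N\sum_{n\le N}\E[\zeta_n]\to\rho$. That is not $\rho^2$ unless $\rho=0$, so it is not sufficient, and this is the main obstacle. To handle it, I will try a second-moment argument on $H_N=\frac1N\sum_{n\le N}h_n(\xi)$, using that
\[
\frac1{N^2}\sum_{m,n\le N}\E[\zeta_m\zeta_n]=\E[H_N^2]+o(1).
\]
The goal is to show $H_N\to\rho$ in $L^2$. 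Since $0\le h_n\le 1$, it suffices to prove concentration of $H_N$. I expect to need extra structure here, for instance that $h_n$ depends on $\xi$ through quantities that become asymptotically deterministic, or a reduction that uses the case $\rho=0$. When $\rho=0$ the bound above already proves (B), and I suspect this is the intended use, possibly combined with an argument that (A) forces $\rho=0$ in this model. This step needs to be checked carefully, and if necessary I would first establish $\rho=0$.

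\textbf{Part b).} For general $p_{n,k}\in[\epsilon,1-\epsilon]$, use
\[
|h_n(\xi)|\le\prod_{k\in O_n(\xi)}|1-2p_{n,k}|=:\tilde h_n(\xi).
\]
Now $\tilde h_n$ equals $\E[\tilde\zeta_n\s\xi]$ for the model with parameters $\tilde p_{n,k}=\min(p_{n,k},1-p_{n,k})\in[\epsilon,1/2]$. By hypothesis ($\widetilde A$) for that model, $\E[\tilde h_n]\to0$. Consequently $|\E[\zeta_n]|\le\E[\tilde h_n]\to0$, which gives (A) with $\rho=0$ by Cesàro. Also $|\E[\zeta_m\zeta_n]|\le\E[\tilde h_n]=:a_n\to0$ for $m<n$, so
\[
\frac1{N^2}\Bigl|\sum_{m<n\le N}\E[\zeta_m\zeta_n]\Bigr|\le\frac1N\sum_{n\le N}a_n\to0,
\]
exactly as in the proof of Proposition~\ref{prop:condBMarkov}. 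This yields (B) with $\rho^2=0$.
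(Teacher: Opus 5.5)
\textbf{Part a): the gap cannot be closed.} You are right that $0\le\E[\zeta_m\zeta_n]\le\E[\zeta_n]$ only gives $\limsup\le\rho$ for the double average, and that this matches $\rho^2$ only when $\rho=0$. However, the step you leave open cannot be completed, because a) is false as literally stated: (A) can hold with $\rho\in(0,1)$ while (B) fails. This bound followed by Ces\`aro is the paper's entire argument, so the paper's proof has the same hole and tacitly uses $\rho=0$.

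Both of your proposed repairs fail. First, (A) does not force $\rho=0$; all $p_{n,k}=0$ already gives $\rho=1$. Second, $H_N$ need not concentrate. For a counterexample, take, independently in $n$, $R_n=\{1,2\}$ with
\[
\psi_n^{-1}(\{1\})=\{\{1\}\},\qquad \psi_n^{-1}(\{2\})=2^{\IeFer{n}}\setminus\{\{1\}\},\qquad p_{n,1}=\tfrac12,\qquad p_{n,2}=0.
\]
Then $\zeta_n=\xi_1^{\beta_{n,1}}$, so $h_n(\xi)=\ind{\{\xi_1=1\}}$, $\E[\zeta_n]=\frac12$, and $\E[\zeta_m\zeta_n]=\frac12$ for $m\ne n$. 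Thus (A) holds with $\rho=\frac12$, but the double average in (B) tends to $\frac12\ne\frac14=\rho^2$.

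In words, $Y=X$ on $\{\xi_1=1\}$, while $Y$ is asymptotically independent of $X$ on $\{\xi_1=-1\}$. The limit is therefore a non-Gaussian mixture, and $H_N=\ind{\{\xi_1=1\}}$ has variance $\frac14$. Replacing $p_{n,2}=0$ by $p_{n,2}=\epsilon$ only changes $h_n$ on $\{\xi_1=\cdots=\xi_n=1\}$, so ellipticity does not rescue the claim.

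What you can and should state is: under $p_{n,k}\le\frac12$, (A) with $\rho=0$ implies (B). For that statement, your displayed bound plus Ces\`aro is already a complete proof, identical to the paper's. The case $\rho=1$ also holds trivially, since $\zeta_m\zeta_n\ge\zeta_m+\zeta_n-1$. Leaving the step open with the plan to ``establish $\rho=0$'' is not viable, because that is false in this model.

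\textbf{Part b): correct, by a slightly different route.} For the off-diagonal terms the paper uses Cauchy--Schwarz, $\abs{\E[h_mh_n]}\le\E[h_m^2]^{1/2}\E[h_n^2]^{1/2}$. It then needs a second auxiliary model, with parameters $2p_{n,k}(1-p_{n,k})\in[\epsilon,\frac12]$, to rewrite $\E[h_n^2]$ as an expectation of $\zeta_n$ and invoke ($\widetilde{A}$). You instead bound $\abs{h_mh_n}\le\abs{h_n}=\tilde h_n$ and reuse the single auxiliary model with parameters $\min(p_{n,k},1-p_{n,k})$, exactly as in the proof of Proposition~\ref{prop:condBMarkov}. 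Both auxiliary parameter families lie in $[\epsilon,\frac12]$, so the hypothesis covers either. Both arguments give (A) and (B) with $\rho=0$; yours is a bit shorter.
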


\begin{proof}
a) For all $n$, we have, due to the dependence structure, 
\begin{align*}
\zeta_n  = (-1)^{\sum_{K\in A_n(\xi)}\beta_{n,K}}
 & = (-1)^{\sum_{k\in R_n} \#\pa[]{ A_n(\xi)\cap\{\psi_n(\cdot)=k\}}\beta_{n,k}}\\
 & = (-1)^{\sum_{k\in R_n, \#( A_n(\xi)\cap\{\psi_n(\cdot)=k\})\text{ is odd}}\beta_{n,k}}
\end{align*}
where $A_{n}(\xi)=2^{\{i\in\IeFer n\st \xi_i=-1\}}$,
hence due to the law of the weights, and the assumption on their parameters,
\begin{align}
\E[\zeta_n\s\xi] & = \E\cro[\bigg]{\prod_{_{k\in R_n, \#( A_n(\xi)\cap\{\psi_n(\cdot)=k\})\text{ is odd}}}\underbrace{(1-2p_{n,k})}_{\ge0}\Big|\xi}\ge 0.\label{eq:Eprod}
\end{align}
On the other hand, obviously $\E[\zeta_n\s\xi]\le1$ since $\abs{\zeta_n}=1$ a.s.. 

Then, for any integers $m\ne n$, the independence assumption entails that $\zeta_m$ and $\zeta_n$ are independent given $\xi$, so that 
\begin{align}
\E[\zeta_m\zeta_n]
	& = \E\cro[\big]{\E[\zeta_m\s\xi]\E[\zeta_n\s\xi]},\label{eq:prodsxi}
\end{align}
and the previous bounds imply
\[
0 \le \E[\zeta_m\zeta_n] \le \E[\E[\zeta_n\s\xi]] = \E[\zeta_n].
\]
As a consequence, as $N\to\infty$, 
\begin{align*}
0\le \frac1{N^2}\sum_{\substack{m,n\in\IeFer N\\m\ne n}}\E[\zeta_m\zeta_n] \le \frac1{N^2}\sum_{\substack{m,n\in\IeFer N\\m\ne n}}\E[\zeta_n]\le\frac1N\sum_{n=1}^N\E[\zeta_n].
\end{align*}
Since as usual the diagonal terms are $\sum_{n=1}^N\E[\zeta_n^2]=N=o(N^2)$, we conclude that (A) implies (B). 

b) Let $(p_{n,k})_{n\in\N, k\in R_n}$ be parameters such that $p_{n,k}\in[\epsilon,1-\epsilon]$ for all $n,k$, where $\epsilon\in[0,\frac12]$. First, observe that, for all $n$, as in~\eqref{eq:Eprod}, for some random subset $B_n(\xi)\subset R_n$, 
\begin{align*}
    \abs[\Big]{\E[\zeta_n]}=\abs[\Big]{\E\cro[\big]{\E[\zeta_n\s\xi]}} 
    & = \abs[\bigg]{\E\cro[\bigg]{\prod_{k\in B_n(\xi)}(1-2p_{n,k})}} \\
     & \le \E\cro[\bigg]{\prod_{k\in B_n(\xi)}\abs{1-2p_{n,k}}} \\
    & = \E\cro[\bigg]{\prod_{k\in B_n(\xi)}(1-2q_{n,k})} = \E'[\E'[\zeta_n\s\xi]]=\E'[\zeta_n],
\end{align*}
where $q_{n,k}=1-p_{n,k}$ if $p_{n,k}>1/2$ and $q_{n,k}=p_{n,k}$ if $p_{n,k}\le1/2$, and $\E'$ refers to a probability measure $\P'$ on $\Omega$ which only differs from $\P$ by the fact that the variables $\beta_{n,k}$ have parameter $q_{n,k}$ instead of $p_{n,k}$. Note that $q_{n,k}\in[\epsilon,1/2]$ for all $n,k$, and that by the above bound, the following implication holds:
\[\frac1N\sum_{n=1}^N\E'[\zeta_n]\to0\qquad \Rightarrow\qquad \abs[\bigg]{\frac1N\sum_{n=1}^N \E[\zeta_n]}\le\frac1N\sum_{n=1}^N\abs[\Big]{\E[\zeta_n]}\to0.\]
Thus, ($\widetilde{\rm A}$) for all parameters in $[\epsilon,1/2]$ implies (A) for all parameters in $[\epsilon, 1-\epsilon]$. 

As for (B), the Cauchy-Schwarz inequality gives, for all $m\ne n$, as in~\eqref{eq:prodsxi},
\begin{align*}
\abs[\big]{\E[\zeta_m\zeta_n]}
	& = \abs[\big]{\E\cro[\big]{\E[\zeta_m\s\xi]\E[\zeta_n\s\xi]}} \le\E\cro[\big]{\E[\zeta_m\s\xi]^2}^{1/2}\E\cro[\big]{\E[\zeta_n\s\xi]^2}^{1/2},
\end{align*}
hence for all $N$, 
\begin{align}
\abs[\bigg]{\frac1{N^2}\sum_{\substack{m,n\in\IeFer N,\\m\ne n}}\E[\zeta_m\zeta_n]}
    & \le \frac1{N^2}\sum_{\substack{m,n\in\IeFer N,\\m\ne n}}\abs[\big]{\E[\zeta_m\zeta_n]} \notag\\
    & \le \pa[\bigg]{\frac1{N} \sum_{m\in\IeFer N}\E\cro[\big]{\E[\zeta_m\s\xi]^2}^{1/2}}^2, \label{eq:CS}
\end{align}
However, as in~\eqref{eq:Eprod}, for some random subset $B_n(\xi)\subset R_n$, 
\begin{align*}
    \E\cro[\Big]{\E[\zeta_n\s\xi]^2}
    & = \E\cro[\bigg]{\prod_{k\in B_n(\xi)}(1-2p_{n,k})^2} 
      = \E\cro[\bigg]{\prod_{k\in B_n(\xi)}(1-2\widetilde p_{n,k})} 
     = \E''[\E''[\zeta_n\s\xi]]=\E''[\zeta_n],
\end{align*}
where $\widetilde p_{n,k}=2p_{n,k}(1-p_{n,k})\in[\eps,1/2]$, and $\E''$ refers to a probability measure $\P''$ on $\Omega$ which only defers from $\P$ by the fact that the variables $\beta_{n,k}$ have parameter $\widetilde p_{n,k}$ instead of $p_{n,k}$. Thus, condition ($\widetilde A$) for the weights $\widetilde p_{n,k}$ gives $\E''[\zeta_n]\to0$ hence $\E[\E[\zeta_n\s\xi]^2]^{1/2}\to 0$ and Condition (B) follows from~\eqref{eq:CS} by Cesàro's lemma.
\end{proof}

\subsection{$n$-invariant case} Let us now consider a stronger dependence assumption, namely that $\beta_{n,K}$ does not vary with $n$. This can be thought of as a way of ``enlarging'' the transformation $\phi_n$ to construct $\phi_{n+1}$ instead of resampling every of its coefficients. 

\begin{proposition}[I.I.\ representation]\label{prop:condBMarkovdep}
Assume (1fM) \textbf{and} that $\beta_{n,k}=\beta_k$ for all $n\in\N$,  $0\le k\le n$ where $(\beta_k)_{k\in\N\cup\{0\}}$ is a family of independent Bernoulli random variables with respective parameters $(p_k)_{k\in\N\cup\{0\}}$ and that for each $k$, $p_k\in[\varepsilon,1-\varepsilon]$. Then Condition (B) holds. 
\end{proposition}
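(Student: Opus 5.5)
The plan is to bound each off-diagonal term $\abs{\E[\zeta_m\zeta_n]}$, $m<n$, by a quantity whose double Cesàro average vanishes, reusing the bit-count analysis from the proof of Proposition~\ref{prop:A_markov}. As there, write $S_n=z_1+\cdots+z_n=\abs{\kappa_n(\xi)}$. Under the present assumption $\beta_{n,k}=\beta_k$, this gives
\[
\zeta_m\zeta_n=(-1)^{\sum_{k\ge0}\pa[\big]{{S_m\choose k}+{S_n\choose k}}\beta_k}.
\]
Conditioning on $\xi$ and using the independence of the $\beta_k$, we get
\[
\abs[\big]{\E[\zeta_m\zeta_n\s\xi]}\le(1-2\epsilon)^{\#D_{m,n}},
\]
where $D_{m,n}$ is the set of $k$ for which exactly one of ${S_m\choose k}$, ${S_n\choose k}$ is odd. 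By Lucas' theorem, as in~\eqref{eq:bitcount}, $D_{m,n}$ is in bijection with $2^{A}\Delta 2^{B}$. Here $A$ and $B$ are the sets of positions of the ones in the binary expansions of $S_m$ and $S_n$.

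The combinatorial step is an elementary lower bound on $\#(2^A\Delta 2^B)$ when $A\ne B$. Say $A\setminus B\neq\emptyset$. Then $2^A\setminus 2^B$ consists of the subsets of $A$ meeting $A\setminus B$. Its cardinality is $2^{\abs A}-2^{\abs{A\cap B}}\ge 2^{\abs A-1}$. Symmetrically, we need a bound in terms of $\abs B=N_n$. If $B\setminus A\ne\emptyset$, the same argument gives $\ge 2^{\abs B-1}$. If instead $B\subsetneq A$, then $\#\ge 2^{\abs A}-2^{\abs B}\ge 2^{\abs B}$. In all cases $A\neq B$ implies $\#(2^A\Delta 2^B)\ge 2^{N_n-1}$. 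Since $A=B$ iff $S_m=S_n$, we obtain
\[
\abs[\big]{\E[\zeta_m\zeta_n]}\le \P(S_m=S_n)+\E\cro[\big]{c^{2^{N_n}}},\qquad c:=(1-2\epsilon)^{1/2}<1.
\]

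Both terms are then controlled. The key observation for the first term is that $S$ is nondecreasing (it counts the $-1$ steps). Hence $S_m=S_n$ forces $z_{m+1}=\cdots=z_n=0$, so $\P(S_m=S_n)=2^{-(n-m)}$. This is summable in $n-m$, so it contributes $O(N)=o(N^2)$ to the double sum. The second term is exactly the quantity bounded in~\eqref{eq:boundNn}, with $1-2\epsilon$ replaced by $c$. Indeed, that chain (the coupling Lemma~\ref{lem:coupling} on the lower $\ell_n$ bits, followed by Lemma~\ref{lem:subexponential}) only used that the base lies in $[0,1)$. Hence $b_n:=\E[c^{2^{N_n}}]=n^{-\frac{1-\alpha}2+o(1)}\to0$.

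Summing, $\frac1{N^2}\sum_{m<n\le N}\abs{\E[\zeta_m\zeta_n]}\le \frac1{N^2}\sum_{n\le N}\pa[\big]{1+n\,b_n}\to0$ by Cesàro's lemma. The diagonal contributes $N=o(N^2)$. This gives (B) with $\rho=0$, consistent with (A) from Proposition~\ref{prop:A_markov}~a). I expect the main obstacle to be the set-counting bound $\#(2^A\Delta2^B)\ge2^{\abs B-1}$, in particular the case $B\subsetneq A$. One must also make sure that no dependence between $\zeta_m$ and $\zeta_n$ through the shared $\beta_k$ is mishandled; this is avoided because everything is computed conditionally on $\xi$ with the exponent combined into a single sum over $k$.
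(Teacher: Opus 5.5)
Your proof is correct and follows essentially the same route as the paper: you split off $\P(S_m=S_n)$, use Lucas' theorem to show that on $\{S_m\neq S_n\}$ at least $2^{N_n-1}$ indices $k$ have exactly one of ${S_m\choose k},{S_n\choose k}$ odd, reuse the chain \eqref{eq:boundNn} with base $(1-2\epsilon)^{1/2}$, and conclude by Cesàro. The only differences are minor: your exact value $\P(S_m=S_n)=2^{-(n-m)}$ (from the monotonicity of $S$) is sharper than the paper's $C/\sqrt{n-m}$, and that same monotonicity makes your case $B\subsetneq A$ vacuous (it would force $S_m>S_n$), which is why the paper only needs the highest differing bit, lying in $B\setminus A$.
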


\begin{corollary}[I.I.\ representation]\label{cor:Markov-Depend} Assume (1fM) \textbf{and} that $\beta_{n,k}=\beta_k$ for all $n\in\N$, $0\le k\le n$,  where $(\beta_k)_{k\in\N\cup\{0\}}$ is a family of independent Bernoulli random variables with respective parameters $(p_k)_{k\in\N\cup\{0\}}$ and that for each $k$, $p_k\in[\varepsilon,1-\varepsilon]$. Then convergence to a standard Brownian motion holds under the annealed measure $\P$: ``the random random walk is normal''. 
\end{corollary}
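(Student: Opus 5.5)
The key observation is that in the $n$-invariant one-factor Markov case, $\zeta_n$ is a \emph{fixed} random function of $S_n$ alone. As in the proof of Proposition~\ref{prop:A_markov}, with $S_n=z_1+\cdots+z_n$ and Lucas' theorem, we have
\[
\zeta_n=(-1)^{\sum_{k\ge0}{S_n\choose k}\beta_k}=(-1)^{\sum_{k\in 2^{[S_n]}}\beta_k}.
\]
Here I write $[s]$ for the set of positions of the ones in the binary expansion of $s$, and identify an integer $k$ with its set of bits, so that $k\in 2^{[s]}$ means that $k$ is a bitwise submask of $s$. Hence, for $m<n$ and conditionally on $\xi$, using the independence of the $\beta_k$,
\[
\E[\zeta_m\zeta_n\s\xi]=\prod_{k\in 2^{[S_m]}\Delta 2^{[S_n]}}(1-2p_k),\qquad\text{so}\qquad
\abs{\E[\zeta_m\zeta_n]}\le \E\cro[\big]{(1-2\epsilon)^{\#(2^{[S_m]}\Delta 2^{[S_n]})}}.
\]

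The combinatorial step is a lower bound on this symmetric difference when $S_m\ne S_n$. Write $N_n=\#[S_n]$ for the bit count. If $[S_n]\not\subset[S_m]$, the submasks of $S_n$ that are not submasks of $S_m$ number $2^{N_n}-2^{\#([S_n]\cap[S_m])}\ge 2^{N_n-1}$. If instead $[S_n]\subsetneq[S_m]$, then $N_m>N_n$ and the count $2^{N_m}-2^{N_n}$ is at least $2^{N_m-1}\ge 2^{N_n}$. In all cases with $S_m\ne S_n$ we get $\#(2^{[S_m]}\Delta 2^{[S_n]})\ge 2^{N_n-1}$. When $S_m=S_n$ we only use the trivial bound $1$. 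Therefore
\[
\abs{\E[\zeta_m\zeta_n]}\le \P(S_m=S_n)+\E\cro[\big]{(1-2\epsilon)^{2^{N_n-1}}}=2^{-(n-m)}+b_n,
\]
since $S_n-S_m$ is Binomial$(n-m,\frac12)$ and so $\P(S_m=S_n)=2^{-(n-m)}$. Here $b_n:=\E[(1-2\epsilon)^{2^{N_n-1}}]$.

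The main step, which I expect to be the only real work, is showing $b_n\to0$. I would redo the argument leading to~\eqref{eq:boundNn} with $1-2\epsilon$ replaced by $c:=(1-2\epsilon)^{1/2}\in(0,1)$, noting that $(1-2\epsilon)^{2^{N-1}}=c^{2^N}$. First, bound $N_n$ from below by $\Nt_n$, the bit count of the lower $\ell_n$ bits. Then apply the coupling of Lemma~\ref{lem:coupling} with a uniform $U_n$, whose bit count is Binomial$(\ell_n,\frac12)$. Finally, apply Lemma~\ref{lem:subexponential} to $\sum_k{\ell_n\choose k}c^{2^k}$. Nothing in that argument depends on the precise constant in $(0,1)$, so it yields $b_n\le n^{-\frac{1-\alpha}2+o(1)}\to0$.

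To conclude, I sum over $1\le m<n\le N$. The first terms contribute $\frac1{N^2}\sum_{n\le N}\sum_{m<n}2^{-(n-m)}\le \frac1N\to0$. The second terms contribute at most $\frac1{N^2}\sum_{n\le N}(n-1)\,b_n\le\frac1N\sum_{n\le N}b_n\to0$ by Cesàro's lemma. The diagonal terms equal $1$ and total $N=o(N^2)$. Hence $\frac1{N^2}\sum_{m,n\le N}\E[\zeta_m\zeta_n]\to0=\rho^2$, which gives Condition (B) with $\rho=0$, consistent with part a) of Proposition~\ref{prop:A_markov}.
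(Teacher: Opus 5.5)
Your proof is correct and follows essentially the same route as the paper. Condition (A) comes from Proposition~\ref{prop:A_markov}~a). For (B), both arguments split on $\{S_m=S_n\}$, use Lucas' theorem to find at least $2^{N_n-1}$ indices $k$ contributing with odd parity, reuse the coupling/subexponential estimate behind \eqref{eq:boundNn}, and conclude with Cesàro's lemma. Your submask count plays the role of the paper's highest differing bit $\mathfrak i_{m,n}$. Your exact value $\P(S_m=S_n)=2^{-(n-m)}$, which holds because $S$ is nondecreasing, is sharper than the paper's $C/\sqrt{n-m}$. Your substitution $c=(1-2\epsilon)^{1/2}$ makes explicit why \eqref{eq:boundNn} still applies with exponent $2^{N_n-1}$.
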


\begin{proof}[Proof of Proposition~\ref{prop:condBMarkovdep}]
Let $N\in\N$. Let $0\le m<n\le N$. We have, as in Proposition~\ref{prop:A_markov}\footnote{Recall the convention that ${s\choose k}=0$ if $k>s$.}, 
\begin{align*}
\zeta_m\zeta_n
    & = (-1)^{\sum_{k=0}^{S_m}{S_m\choose k}\beta_{k}+\sum_{k=0}^{S_n}{S_n\choose k}\beta_k}
	=(-1)^{\sum_{k=0}^{m}{S_m\choose k}\beta_{k}+\sum_{k=0}^{n}{S_n\choose k}\beta_k} \\
	& = (-1)^{\sum_{k=0}^m\pa[\big]{{S_m\choose k}+{S_n\choose k}}\beta_k}(-1)^{\sum_{k=m+1}^n {S_n\choose k}\beta_k},\\
	& = (-1)^{\sum_{k\in\IeFer m\st  {S_m\choose k}+{S_n\choose k}\text{ odd}}\beta_k}(-1)^{\sum_{k\in\IeFer n\setminus\IeFer m\st {S_n\choose k}\text{ odd}}\beta_k}
\end{align*}

hence, using the distribution of $(\beta_0,\ldots,\beta_n)$, 
\begin{align*}
\E[\zeta_m\zeta_n] = \E[\E[\zeta_m\zeta_n\s \xi]]
	& = \E\cro[\Big]{\prod_{k\in\IeFer m\st  {S_m\choose k}+{S_n\choose k}\text{ odd}}(1-2p)\prod_{k\in\IeFer n\setminus\IeFer m\st {S_n\choose k}\text{ odd}\}}(1-2p)}. 
\end{align*}
We use the triangle inequality (or not, if $p\le1/2$) so terms become positive and we may derive a first bound:
\newcommand{\expo}{\text{Exponent}_{m,n}(\xi)}
\begin{align}
\abs[\big]{\E[\zeta_m\zeta_n]} 
	& \le \E\cro[\Big]{\abs{1-2\epsilon}^{\#\{k\in\IeFer m\st  {S_m\choose k}+{S_n\choose k}\text{ odd}\}}\abs{1-2\epsilon}^{\#\{k\in\IeFer n\setminus\IeFer m\st {S_n\choose k}\text{ odd}\}}}\notag\\
	& \le   \E\cro[\Big]{\abs{1-2\epsilon}^{\expo}}\notag\\
	& \le \P(S_n=S_m)+ \E\cro[\Big]{\abs{1-2\epsilon}^{\expo}, S_m<S_n}\label{eq:sumevenodd}
\end{align}
where we denote
\begin{align}
\expo 
	& :=
\#\{k\in\IeFer m\st  {S_m\choose k}\text{ even, }{S_n\choose k}\text{ odd}\}+\#\{k\in\IeFer n\setminus\IeFer m\st {S_n\choose k}\text{ odd}\}.\notag
\end{align}
First, classically, 
\[\P(S_m=S_n)=\P(S_{n-m}=0)=\frac1{2^{n-m}}{n-m\choose\frac{n-m}2}\ind{\{n\equiv m\modulo2\}} \le \frac C{\sqrt{n-m}},
\]
for some constant $C>0$. This implies, as $N\to\infty$, 
\begin{align}
\frac1{N^2}\sum_{0\le m<n<N}\P(S_m=S_n)
	\le \frac1{N^2}\sum_{0\le m<n<N}\frac{C}{\sqrt{n-m}} = \frac{C}{N^2}\sum_{k=1}^{N-1}\frac{N-k}{\sqrt{k}}\le \frac{C}N\sum_{k=1}^{N-1}\frac1{\sqrt k}\to0.\label{eq:limitequality}
\end{align}
\newcommand{\imn}{\mathfrak i_{m,n}}
Let us consider the last term in~\eqref{eq:sumevenodd}. On $\{S_m<S_n\}$, denote by $\imn$ the index of the highest (leftmost) bit that differs in the binary representations of $S_m$ to $S_n$, thus necessarily $S_n$ has a 1 and $S_m$ has a 0 at index $\imn$. This also implies that $S_n-S_m < 2^{\imn+1}$. 

Remember that, by Lucas' theorem \cite{Lucas}, the binomial coefficient ${l\choose k}$ is odd if, and only if, denoting by $k_r\cdots k_1k_0$ and $l_r\cdots l_1l_0$ the binary representations of $k$ and $l$ (padding with 0's on the left of the smaller number so that their lengths match), for $i=0,\ldots,r$ one has  $k_i=1$ $\Rightarrow$ $l_i=1$. 

Thus, for every $k$ that has a 1 at index $\imn$, the binomial coefficient ${S_m\choose k}$ is even. Let us denote by $K_{m,n}$ the set of such integers $k$, so that, on $\{S_m<S_n\}$, 
\[
\#\{k\in\IeFer m\st  {S_m\choose k}\text{ even, }{S_n\choose k}\text{ odd}\}
\ge 
\#\{k\in K_{m,n}\cap\IeFer m\st {S_n\choose k}\text{ odd}\}. 
\]

As a consequence, the exponent in~\eqref{eq:sumevenodd} can be bounded as follows, on $\{S_m<S_n\}$: 
\begin{align}
\expo
	& \ge \#\{k\in K_{m,n}\cap\IeFer m\st {S_n\choose k}\text{ odd}\}+\#\{k\in\IeFer n\setminus\IeFer m\st {S_n\choose k}\text{ odd}\}\notag\\
	& \ge \#\{k\in K_{m,n}\cap\IeFer n\st {S_n\choose k}\text{ odd}\} = 2^{N_n-1},\label{eq:Nnm1}
\end{align}

where $N_n$ is the bit count of $S_n$, i.e.\ the number of ones in the binary representation of $S_n$, so that $N_n-1$ is the number of bits of $S_n$ excluding $i_{m,n}$. Recall that both $S_n$ and $k\in K_{m,n}$ have a 1 at index $i_{m,n}$. Then \eqref{eq:Nnm1} comes from Lucas theorem: a number $k\in\IeFer n$ has ${S_n \choose k}$ odd if and only if the locations of its 1's (in its binary representation) is a subset of the locations of the 1's of $S_n$; since these locations characterize $k$, there are exactly $2^{N_n}$ values of $k$ for which ${S_n\choose k}$ is odd. If furthermore $k$ is restricted to $K_{m,n}$, then exactly 1 of the 1's of $S_n$ is ``forced'' in $k$, hence the correction by $-1$. 

The end of the proof then follows from the estimates of Proposition~\ref{prop:condBMarkov}. 
Choose any $\alpha\in(0,1)$. By~\eqref{eq:Nnm1} and the inequality in~\eqref{eq:boundNn}, we have
\begin{align*}
\E[\abs{1-2\epsilon}^{\expo},S_m<S_n]
	& \le \E[\abs{1-2\epsilon}^{2^{N_n-1}}]
	\le n^{-\frac{1-\alpha}2+o(1)}=: a_n\limites{}{n\to\infty}0.
\end{align*}
Then, as $N\to\infty$, getting back to the decomposition~\eqref{eq:sumevenodd} and using~\eqref{eq:limitequality} to get the ``$o(1)$'' below, we have
\begin{align*}
\abs[\bigg]{\frac1{N^2}\sum_{0\le m<n< N}\E[\zeta_m\zeta_n]}
	& \le \frac1{N^2}\sum_{0\le m<n< N}\abs[\big]{\E[\zeta_m\zeta_n]}\\
	& \le o(1)+\frac1{N}\sum_{n=1}^{N-1}a_n \limites{}{n\to\infty}0
\end{align*}
by Cesàro's lemma. 
\end{proof}

\section{Totally unordered representation}
\label{sec:totally_unordered}

Let us consider the T.U.\ representation, i.e. 
\[\zeta_n = (-1)^{\beta_{n,\kappa_n(\xi)}}\]
where as usual $\kappa_n(\xi)=\{i\in\IeFer n\st \xi_i=-1\}$. 

\subsection{$n$-independent case}

Assume momentarily that $(\beta_{n,K})_{n,K}$ are i.i.d., or at least independent as $n$ varies and sharing same marginal distribution. In this case, the \emph{annealed} law of $(X,Y)$ is simple. Indeed, under $\P=\E[\P^\beta(\cdot)]$, for all $n$, 
\[
(\xi_{n+1},\eta_{n+1})=(\xi_{n+1}, (-1)^{\beta_{n,\kappa_n(\xi)}}\xi_{n+1})
\]
is independent of $(X_1,Y_1),\ldots,(X_n,Y_n)$ (this is Lemma~\ref{lem:tot_unordered} below) and distributed as two Rademacher r.v.'s with parameter $\frac12$ and covariance
\[\E[\xi_{n+1}\eta_{n+1}]=\E[(-1)^{\beta_{n,\kappa_n(\xi)}}]=1-2p.\]
Then, Donsker's theorem directly shows that $W^{(n)}$ converges to a Brownian motion with marginal variance 1 and covariance $1-2p$. 

Let us give a slightly finer statement.

Let, for $n\in\N$,  $\mathfrak b_n=(\beta_{n,K})_{K\subset\IeFer n}$, and assume that

\begin{description}
    \item[(H)] the family $(\mathfrak b_n)_{n\in\N}$ is mutually independent and for any given $n$, all $\beta_{n,K}$, $K\subset\IeFer n$, have same Bernoulli distribution with parameter $p_n$, $\Br(p_n)$.
\end{description}

\begin{proposition}[T.U.\ representation]~\label{lem:tot_unordered}
Under Assumption (H), the sequence $(\zeta_n)_n$ is independent under $\P$. 
\end{proposition}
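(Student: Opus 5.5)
We will show by induction on $n$ that, for all $n$ and all $(e_0,\ldots,e_n)\in\Er^{n+1}$,
\[\P(\zeta_0=e_0,\ldots,\zeta_n=e_n)=\prod_{j=0}^n\P(\zeta_j=e_j),\]
which is equivalent to the independence of $(\zeta_n)_n$. In fact we will prove the stronger statement that $\zeta_n$ is independent of $(\xi_1,\ldots,\xi_n,\zeta_0,\ldots,\zeta_{n-1})$ and has law $\P(\zeta_n=-1)=p_n$. This stronger form is what makes the induction go through, since $\zeta_n$ is a function of $\xi_1,\ldots,\xi_n$ and of the block $\mathfrak b_n$ only.

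The key step is a conditioning argument. Set $\mathcal G_n=\sigma(\xi_1,\ldots,\xi_n,\mathfrak b_0,\ldots,\mathfrak b_{n-1})$, where $\mathfrak b_0$ is trivial since $\zeta_0=1$. In the T.U.\ case, $\zeta_j=(-1)^{\beta_{j,\kappa_j(\xi)}}$ is $\mathcal G_n$-measurable for every $j<n$. By assumption (H) together with the independence of $\xi$ and $\beta$, the block $\mathfrak b_n$ is independent of $\mathcal G_n$. Hence, for any bounded measurable $h$,
\[\E\cro[\big]{h(\zeta_n)\s\mathcal G_n}=\E\cro[\big]{h\pa{(-1)^{\beta_{n,K}}}}\Big|_{K=\kappa_n(\xi)},\]
by the freezing lemma, since $\kappa_n(\xi)$ is $\mathcal G_n$-measurable and $\mathfrak b_n$ is independent of $\mathcal G_n$. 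Because all $\beta_{n,K}$, $K\subset\IeFer n$, share the same law $\Br(p_n)$, the right-hand side does not depend on $K$. So $\E[h(\zeta_n)\s\mathcal G_n]=p_nh(-1)+(1-p_n)h(1)$ is constant. This gives that $\zeta_n$ is independent of $\mathcal G_n$ with the stated law.

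To conclude, we write $\P(\zeta_0=e_0,\ldots,\zeta_n=e_n)=\E[\ind{\{\zeta_j=e_j,\,j<n\}}\P(\zeta_n=e_n\s\mathcal G_n)]$, which factorizes by the previous step, and then iterate. The only delicate point is the freezing step. It requires that the law of $\beta_{n,K}$ be the same for all $K$, which (H) guarantees, and it requires no assumption on the joint dependence inside $\mathfrak b_n$, since only one coordinate $K=\kappa_n(\xi)$ is used at a time. I expect no real obstacle beyond writing this carefully, for instance by summing over the finitely many values $K\subset\IeFer n$ of $\kappa_n(\xi)$ instead of invoking the freezing lemma.

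As a byproduct, the same computation with $\mathcal G_n$ enlarged by $\xi_{n+1}$ shows that $(\xi_{n+1},\eta_{n+1})=(\xi_{n+1},\zeta_n\xi_{n+1})$ is independent of $(X_j,Y_j)_{j\le n}$. This is the fact used in the annealed Donsker argument preceding the statement.
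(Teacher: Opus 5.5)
Your argument is correct and uses the same ingredients as the paper's proof: independence of $\xi$ and $\beta$, mutual independence of the blocks $\mathfrak b_n$, and the common law $\Br(p_n)$ inside each block, which makes the law of $\zeta_n$ given the realized value of $\kappa_n(\xi)$ independent of that value. The paper does this in one stroke rather than by inductive conditioning, summing over all values $(L_1,\ldots,L_n)$ of $(\kappa_1(\xi),\ldots,\kappa_n(\xi))$, factorizing, and replacing each $L_j$ by $\emptyset$; your stronger formulation ($\zeta_n$ independent of $\xi_1,\ldots,\xi_n$ and of the earlier blocks) directly justifies the claim made just before the statement that $(\xi_{n+1},\eta_{n+1})$ is independent of $(X_j,Y_j)_{j\le n}$, which the paper's computation only yields implicitly.
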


\begin{proof}
For any $n$, and $z_1,\ldots,z_n\in\sE$, we have
\begin{align*}
 & \hspace{-1cm}  \P(\zeta_1=z_1,\ldots,\zeta_n=z_n)  \\
	& = \P((-1)^{\beta_{1,\kappa_1(\xi)}}=z_1,\ldots,(-1)^{\beta_{n,\kappa_n(\xi)}}=z_n)\\
	& = \sum_{L_1,\ldots,L_n} \P((-1)^{\beta_{1,L_1}}=z_1,\ldots,(-1)^{\beta_{n,L_n}}=z_n, \kappa_1(\xi)=L_1,\ldots,\kappa_n(\xi)=L_n)\\
	& = \sum_{L_1,\ldots,L_n} \P((-1)^{\beta_{1,L_1}}=z_1,\ldots,(-1)^{\beta_{n,L_n}}=z_n)\P(\kappa_1(\xi)=L_1,\ldots,\kappa_n(\xi)=L_n)\\
	& = \sum_{L_1,\ldots,L_n} \P((-1)^{\beta_{1,L_1}}=z_1)\cdots\P((-1)^{\beta_{n,L_n}}=z_n)\P(\kappa_1(\xi)=L_1,\ldots,\kappa_n(\xi)=L_n)\\
	& = \sum_{L_1,\ldots,L_n} \P((-1)^{\beta_{1,\emptyset}}=z_1)\cdots\P((-1)^{\beta_{n,\emptyset}}=z_n)\P(\kappa_1(\xi)=L_1,\ldots,\kappa_n(\xi)=L_n)\\
	& = \P((-1)^{\beta_{1,\emptyset}}=z_1)\cdots\P((-1)^{\beta_{n,\emptyset}}=z_n)
\end{align*}
and this product form classically implies the claim.
\end{proof}

\begin{proposition}[T.U.\ representation] \label{tot_unord_annealed}
Assume Assumption (H). Then, under the annealed law, 
\[\text{$W^{(n)}$ converges to a Brownian motion as $n\to\infty$}\quad\Leftrightarrow\quad\frac1N\sum_{n=1}^N p_n\limites{}{N\to\infty}p\in[0,1],\]
and under this condition the limit of $W^{(n)}$ is a Brownian motion with marginal variance 1 and covariance $1-2p$. 
\end{proposition}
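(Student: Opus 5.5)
We reduce the statement to Theorem~\ref{mainth} under the annealed measure, which the paper notes holds unchanged: $W^{(n)}$ converges under $\P$ to a two-dimensional Brownian motion with correlation $\rho$ if and only if (A) and (B) hold. So the task is to compute $\E[\zeta_{n-1}]$ and $\E[\zeta_{m-1}\zeta_{n-1}]$ explicitly under (H), using Proposition~\ref{lem:tot_unordered}.

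First, the one-dimensional marginals. By the computation in the proof of Proposition~\ref{lem:tot_unordered}, $\zeta_n$ has the law of $(-1)^{\beta_{n,\emptyset}}$, so $\E[\zeta_n]=1-2p_n$ for $n\ge1$, while $\zeta_0=1$. Condition (A) then reads
\[\frac1N\sum_{n=1}^N\E[\zeta_{n-1}]=\frac1N\Bigl(1+\sum_{n=1}^{N-1}(1-2p_n)\Bigr)\limites{}{N\to\infty}\rho .\]
Since the isolated term $1/N$ vanishes, this holds if and only if $\frac1N\sum_{n=1}^N p_n$ converges to some $p$, and then $\rho=1-2p$. The limit $p$ automatically lies in $[0,1]$.

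Next, condition (B). By the independence of $(\zeta_n)_n$ under $\P$ (Proposition~\ref{lem:tot_unordered}), for $m\ne n$ we have $\E[\zeta_{m-1}\zeta_{n-1}]=\E[\zeta_{m-1}]\E[\zeta_{n-1}]$, while the diagonal terms equal $1$. Writing $c_n=\E[\zeta_{n-1}]\in[-1,1]$, this gives
\[\frac1{N^2}\sum_{m,n=1}^N\E[\zeta_{m-1}\zeta_{n-1}]=\Bigl(\frac1N\sum_{n=1}^N c_n\Bigr)^2+\frac1{N^2}\sum_{n=1}^N(1-c_n^2).\]
The last term is $O(1/N)$. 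Hence whenever (A) holds with limit $\rho$, (B) holds as well, with limit $\rho^2$. So (A) and (B) together are equivalent to (A) alone, which we have just shown to be equivalent to the Cesàro convergence of $(p_n)$. Theorem~\ref{mainth} then gives both directions of the equivalence together with the identification of the limit: each marginal has variance $1$ and the correlation (equivalently, covariance) is $1-2p$.

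The only genuinely delicate point is the ``only if'' direction. The statement says ``converges to a Brownian motion'', whereas Theorem~\ref{mainth} characterizes convergence to a (possibly degenerate, possibly correlated) two-dimensional Brownian motion. I would note that any Brownian limit here must have unit-variance marginals, since $X$ and $Y$ are simple random walks. It is therefore such a correlated Brownian motion, and the necessity part of Theorem~\ref{mainth} forces (A), hence Cesàro convergence of $(p_n)$. Alternatively, one could argue directly: given independence of $(\xi_{n+1},\eta_{n+1})$ from the past, a martingale CLT or the convergence of the covariation $\frac1N\sum_{k} \zeta_{k-1}$ yields the same conclusion. Everything else is routine.
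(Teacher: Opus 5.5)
Your proof is correct. The necessity direction matches the paper's: both read off (A) from $\E[\zeta_n]=1-2p_n$. Your remark that any Brownian limit must have unit-variance marginals, and is therefore covered by Theorem~\ref{mainth}, is welcome extra care.

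For sufficiency you take a different route. You verify (B) directly, using the independence from Proposition~\ref{lem:tot_unordered} to write the normalized double sum as $\bigl(\frac1N\sum_n c_n\bigr)^2+\frac1{N^2}\sum_n(1-c_n^2)$, so that (B) follows from (A) for free. The paper instead applies the three-series theorem to $\sum_n(\zeta_n-\E[\zeta_n])/n$ and then Kronecker's lemma. This gives the \emph{almost sure} convergence $\frac1N\sum_{n=1}^N\zeta_n\to1-2p$, and the paper concludes via criterion (C).

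Your argument is more elementary, essentially an $L^2$ weak law of large numbers, and it stays within the (A)/(B) framework. The paper's argument buys $\P$-a.s.\ convergence, which it reuses verbatim in Proposition~\ref{tot_unord_quenched} to get quenched convergence for a.e.\ $\beta$: an event of full $\P$-probability has full $P^\beta$-probability for a.e.\ $\beta$.

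Your second-moment bound by itself only yields convergence in $\P$-probability. To recover the quenched statement you would need one more standard step. The variance of $\frac1N\sum_n\zeta_n$ is at most $1/N$, so Chebyshev and Borel--Cantelli give a.s.\ convergence along $N=k^2$. You then interpolate between consecutive squares, using that the increments are bounded.
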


Note that the assumptions are almost ``orthogonal'' to those of the General Case (G) for the previous representation: independence is assumed here as $n$ varies, not $K$; and the parameters $p_{n,K}$ satisfy much stricter conditions here, and play a role in the limit. 

Note also that at $p=1/2$ and under the assumption of independence as $n$ varies (as above), both results deal with the same model since for all $n$ the function $\phi_n$ is uniformly distributed, and $\phi_1,\phi_2,\ldots$ form an independent sequence.

\begin{proof}
The direct implication follows straight from Condition (A) of Theorem~\ref{mainth}, since $\E[\zeta_n]=1-2p_n$:
\[
(A)\quad\Rightarrow\quad \frac1N\sum_{n=1}^N(1-2p_n)\limites{}{N\to\infty}\rho\in\R
\quad\Rightarrow\quad \frac1N\sum_{n=1}^N p_n\limites{}{N\to\infty} \frac{1-\rho}2=:p\in[0,1].
\]

Let us now assume $\frac1N\sum_{n=1}^Np_n\to p$. 
As mentioned above, the proof then relies on the independence, under $\P$, of the sequence $(\zeta_n)_n$.

This is shown in Lemma \ref{lem:tot_unordered} and allows to apply the three-series theorem to prove that the following series of independent terms converges almost surely:
\[\sum_{n=1}^\infty\frac{\zeta_n-\E[\zeta_n]}n.\]
By Kronecker's lemma, it follows that
\[\text{$\P$-a.s.,}\qquad\frac1N\sum_{n=1}^N(\zeta_n-\E[\zeta_n])\limites{}{N\to\infty}0.\]
Due to the assumption and $\E[\zeta_n]=1-p_n$, this entails
\begin{align}
\text{$\P$-a.s.,}\qquad\frac1N\sum_{n=1}^N\zeta_n\limites{}{N\to\infty}1-2p,\label{eq:cvps}
\end{align}
which implies the convergence in law (see Criterion (C)). 
\end{proof}

The annealed case turns out to also imply that the quenched distribution behaves similarly.

\begin{proposition}[T.U.\ representation]\label{tot_unord_quenched}
Assume Assumption (H) and $\frac1N\sum_{n=1}^Np_n\limites{}{N\to\infty}p\in[0,1]$.
Then, for $\P$-almost every $\beta$, $W^{(n)}$ converges to a Brownian motion with marginal variance 1 and covariance $1-2p$. 
\end{proposition}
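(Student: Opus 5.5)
The plan is to deduce the quenched statement from the annealed almost sure convergence~\eqref{eq:cvps} obtained in the proof of Proposition~\ref{tot_unord_annealed}, through Criterion~(C) applied under $P^\beta$. By Theorem~\ref{mainth} and the equivalence of (A)--(B) with (C), it suffices to show that, for $\mu$-a.e.\ $\beta$,
\[
\frac1N\sum_{n=1}^N\zeta_{n-1}\limites{}{N\to\infty}1-2p\quad\text{in $P^\beta$-probability}.
\]
This formulation uses the fact that, for every fixed $\beta$, $(X_n,Y_n)$ is a martingale under $P^\beta$, so Theorem~\ref{mainth} (in its form (C)) applies to each quenched law. In (C) the index shift between $\zeta_{n-1}$ and $\zeta_n$ is harmless.

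First, under Assumption~(H) and $\frac1N\sum p_n\to p$, the proof of Proposition~\ref{tot_unord_annealed} gives $\P$-almost sure convergence~\eqref{eq:cvps}. This relies on the independence of $(\zeta_n)$ under $\P$ from Proposition~\ref{lem:tot_unordered}, on the three-series theorem, and on Kronecker's lemma. The point to check there is that $\E[\zeta_n]=1-2p_n$, so that the Cesàro means of $\E[\zeta_n]$ converge to $1-2p$.

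Second, we disintegrate. Let $G=\{\frac1N\sum_{n\le N}\zeta_n\to1-2p\}$, an event measurable with respect to $(\xi,\beta)$. Since $\P(G)=\int P^\beta(G)\,d\mu(\beta)=1$ and $P^\beta(G)\le 1$, we get $P^\beta(G)=1$ for $\mu$-a.e.\ $\beta$. Here $P^\beta$ is the law of $\xi$ (hence of $\zeta$, which is a measurable function of $\xi$ and $\beta$) with $\beta$ frozen. This uses only the independence of $\xi$ and $\beta$ together with Fubini's theorem. For such $\beta$, the Cesàro averages converge $P^\beta$-almost surely, hence in $P^\beta$-probability, to the deterministic constant $1-2p$. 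Condition (C) therefore holds under $P^\beta$ with $\rho_\beta=1-2p$, and Theorem~\ref{mainth} yields the claimed convergence to a Brownian motion with marginal variance $1$ and covariance $1-2p$.

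The only delicate step is the first one: obtaining almost sure rather than merely in-probability convergence under $\P$, because only an almost sure statement disintegrates cleanly into quenched statements. In-probability convergence would only give convergence in $\mu$-probability of $P^\beta(|\cdot|>\eps)$, which is insufficient. This is exactly why we use the three-series/Kronecker argument, whose bounded independent summands $(\zeta_n-\E[\zeta_n])/n$ have summable variances $\le 4/n^2$. Once that is in place, the remainder is measure-theoretic bookkeeping.
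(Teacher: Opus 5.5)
Your proposal is correct and is essentially the paper's own proof. You take the $\P$-almost sure convergence of $\frac1N\sum_{n\le N}\zeta_n$ to $1-2p$ from the annealed argument, disintegrate it via $\int P^\beta(G)\,d\mu(\beta)=1$ to get $P^\beta(G)=1$ for $\mu$-a.e.\ $\beta$, and conclude by Condition (C) under $P^\beta$ and Theorem~\ref{mainth}; your $\E[\zeta_n]=1-2p_n$ is also right, where the paper's text has a slip writing $1-p_n$.
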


\begin{proof}
In the course of the proof of Proposition~\ref{tot_unord_annealed}, we proved the almost sure convergence~\eqref{eq:cvps}, which can be rephrased as follows:
\[\E\cro[\bigg]{P^\beta\pa[\bigg]{\frac1N\sum_{n=1}^N\zeta_n\limites{}{N\to\infty}1-2p}}=1.\]
This however entails that the inner probability equals 1 $\P$-a.s., i.e.
\[\text{for $\P$-a.e.\ $\beta$,\ \  $P^\beta$-a.s.,}\qquad\frac1N\sum_{n=1}^N\zeta_n \limites{}{N\to\infty}1-2p.\]
This proves condition $\Cbeta$ for $\P$-a.e.~$\beta$, which concludes. 
\end{proof}

As a particular case, notice that the above applies if we further assume $\beta_{n,K}=\beta_n$ for all $K$, and $p_{n}=p$ to simplify the statement. 

\begin{corollary}[T.U.\ representation]\label{cor:tot_unord_quenched_equal}
Assume $(\beta_n)_n$ are independent Bernoulli($p$) random variables and, for all $n\in\N$, $K\subset\IeFer n$, $\beta_{n,K}=\beta_n$. Then, $\P$-almost every $\beta$, $W^{(n)}$ converges to a Brownian motion with marginal variance 1 and covariance $1-2p$.
\end{corollary}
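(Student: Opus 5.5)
This is a special case of Proposition~\ref{tot_unord_quenched}, so the plan is to check that its hypotheses hold and then read off the conclusion. Set $\beta_{n,K}=\beta_n$ for all $K\subset\IeFer n$. Then the blocks $\mathfrak b_n=(\beta_{n,K})_{K\subset\IeFer n}=(\beta_n,\ldots,\beta_n)$ are functions of distinct variables $\beta_n$. Since $(\beta_n)_n$ is independent, the family $(\mathfrak b_n)_{n\in\N}$ is mutually independent. Within block $n$, every $\beta_{n,K}$ equals $\beta_n$, hence has the common law $\Br(p_n)$ with $p_n=p$. So Assumption~(H) holds; note that (H) requires only equal marginals inside a block and allows arbitrary dependence, here total equality. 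Moreover $\frac1N\sum_{n=1}^N p_n=p\to p\in[0,1]$, so the Cesàro condition of Proposition~\ref{tot_unord_quenched} is immediate.

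Proposition~\ref{tot_unord_quenched} then gives, for $\P$-almost every $\beta$, convergence of $W^{(n)}$ to a Brownian motion with marginal variance $1$ and covariance $1-2p$, which is the claim.

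As a sanity check, I would also note a direct argument. Here $\zeta_n=(-1)^{\beta_{n,\kappa_n(\xi)}}=(-1)^{\beta_n}$ does not depend on $\xi$ at all. Under $P^\beta$ the sequence $\zeta$ is therefore deterministic, and $\frac1N\sum_{n\le N}\zeta_n\to 1-2p$ for a.e.\ $\beta$ by the strong law of large numbers applied to the i.i.d.\ $\beta_n$. This is exactly $\Cbeta$ with $\rho_\beta=1-2p$.

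There is no real obstacle. The only point to be careful about is the verification of (H), which does not require independence within a block, so the degenerate equality $\beta_{n,K}=\beta_n$ is allowed.
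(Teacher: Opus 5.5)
Your proposal is correct and follows the paper's route: the paper also obtains this corollary simply by noting that $\beta_{n,K}=\beta_n$ with $p_n=p$ satisfies Assumption (H) and the Ces\`aro condition, so Proposition~\ref{tot_unord_quenched} applies directly. Your extra direct check is also valid: $\zeta_n=(-1)^{\beta_n}$ is deterministic under $P^\beta$, and the strong law of large numbers then gives condition $(\mathrm{C}_\beta)$ with $\rho_\beta=1-2p$ for almost every $\beta$.
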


\subsection{One-factor Markov case}

Recall (see Section~\ref{sec:II_1fMarkov}) that the one-factor Markov case (i.e.\ for all $n\ge1$, $\zeta_n=f_n(X_n)$ for some function $f_n$) amounts to $\beta_{n,K}=\beta_{n,L}$ as soon as $\abs K=\abs L$ (both in I.I.\ and T.U.\ cases). 

Remarkably, the previous results, both annealed and quenched, already encompass this case, since any correlation among the variables $\beta_{n,K}$ where $K\subset\IeFer n$ is allowed, for all $n$. 

\subsection{$n$-constant case}

In order to investigate the effect of stronger correlation in the bootstrap mechanism as $n$ varies, suppose now $\beta_{n,K}$ does not depend on $n$, i.e.\ there is a family $(\beta_K)_{K\subset\N,\ \text{finite}}$ of i.i.d.\ random variables with distribution $\Br(p)$, and set 
\[\beta_{n,K}=\beta_K\quad\text{for $n\ge1$, $K\subset\IeFer n$.}\]
Due to the particular form of the T.U.\ representation, we shall see that this case still remains simple. 

\begin{proposition}[T.U.\ representation]\label{tot_unorder_Kindep}
    Let $(\beta_K)_{K\subset\N,\ \text{finite}}$ be a family of i.i.d.\ Bernoulli random variables with parameter $p$. Assume $\beta_{n,K}=\beta_K$ for all $n\in\N$, $K\subset\IeFer n$. 
 
    Then both annealed and a.s.\ quenched convergence hold for $W^{(n)}$ toward a Brownian motion with correlation $1-2p$. 
\end{proposition}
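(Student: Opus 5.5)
The plan is to prove, exactly as in Proposition~\ref{tot_unord_annealed}, the $\P$-almost sure convergence
\[
\frac1N\sum_{n=1}^N\zeta_n\limites{}{N\to\infty}1-2p .
\]
Criterion (C), which holds unchanged under $\P$, then gives annealed convergence. The Fubini argument from the proof of Proposition~\ref{tot_unord_quenched} then upgrades it to condition $\Cbeta$ for $\P$-a.e.\ $\beta$. That argument writes the a.s.\ statement as $\E[P^\beta(\cdot)]=1$, so the inner probability is $1$ for a.e.\ $\beta$. This gives quenched convergence. Everything therefore reduces to an a.s.\ law of large numbers for $(\zeta_n)$ under $\P$.

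The key structural observation concerns the sets $\kappa_n(\xi)$. We have $\kappa_{n+1}(\xi)=\kappa_n(\xi)$ if $\xi_{n+1}=1$, and $\kappa_{n+1}(\xi)=\kappa_n(\xi)\cup\{n+1\}$ if $\xi_{n+1}=-1$. Let $0=\tau_0<\tau_1<\tau_2<\cdots$ be the successive times with $\xi_{\tau_j}=-1$ (for $j\ge1$). Then $\kappa_n(\xi)=K_j:=\{\tau_1,\ldots,\tau_j\}$ for $\tau_j\le n<\tau_{j+1}$. Hence $\zeta_n=\varepsilon_j:=(-1)^{\beta_{K_j}}$ is constant on the $j$-th run, which has length $L_j=\tau_{j+1}-\tau_j$. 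The sets $K_j$ are strictly increasing, hence pairwise distinct. Conditionally on $\xi$, the signs $(\varepsilon_j)_{j\ge0}$ are therefore i.i.d.\ with $\P(\varepsilon_j=-1)=p$, since they are read off distinct coordinates of the i.i.d.\ family $(\beta_K)$, which is independent of $\xi$. Because this conditional law does not depend on $\xi$, under $\P$ the sequence $(\varepsilon_j)_j$ is i.i.d.\ with mean $1-2p$ and independent of $\xi$, hence of $(L_j)_j$. The $L_j$ are i.i.d.\ geometric with parameter $1/2$ and mean $2$. (The index $n=0$ with $\zeta_0=1$ affects only one term and is irrelevant.)

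Next, let $J(N)$ be the index of the run containing $N$. Then
\[
\sum_{n=1}^N\zeta_n=\sum_{j<J(N)}\varepsilon_jL_j+R_N,\qquad \abs{R_N}\le L_{J(N)}+1 .
\]
The pairs $(\varepsilon_j,L_j)$ are i.i.d.\ and integrable. The strong law of large numbers gives $\frac1J\sum_{j<J}\varepsilon_jL_j\to\E[\varepsilon_0]\E[L_0]=2(1-2p)$ and $\frac1J\sum_{j<J}L_j\to2$. Since $\sum_{j<J(N)}L_j\le N<\sum_{j\le J(N)}L_j$ and $J(N)\to\infty$, the ratio gives the limit $1-2p$. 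The remainder is negligible because $L_J/J\to0$ a.s., a standard consequence of the SLLN, so $R_N/N\to0$.

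The only delicate point is the rigorous justification that $(\varepsilon_j)$ is independent of $(L_j)$ with the stated i.i.d.\ law. I would handle it by conditioning on $\xi$ and summing over the possible deterministic values of $(K_0,\ldots,K_J)$, as in the proof of Lemma~\ref{lem:tot_unordered}. The remaining steps are routine renewal-reward arguments.
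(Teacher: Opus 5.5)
Your proof is correct. Its outer structure is exactly the paper's: first a $\P$-a.s.\ law of large numbers for $(\zeta_n)$, then criterion (C) for the annealed statement, then the Fubini step giving $(\mathrm{C}_\beta)$ for a.e.\ $\beta$. Where you differ is in how you obtain the law of large numbers.

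The paper observes that under $\P$ the sequence $(\zeta_n)_n$ is itself a two-state Markov chain with kernel $z\mapsto\frac12\delta_z+\frac12\nu$, where $\nu$ is the law of $(-1)^{\beta_\emptyset}$. If $\xi_{n+1}=+1$ the value is kept. If $\xi_{n+1}=-1$ it is replaced by $(-1)^{\beta_{\kappa_n(\xi)\cup\{n+1\}}}$, which is a fresh draw because that set has never been used before. The paper then simply invokes the ergodic theorem for this chain.

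Your times $\tau_j$ are precisely the regeneration times of that chain. Your run decomposition (i.i.d.\ signs $\varepsilon_j$, independent of the i.i.d.\ geometric lengths $L_j$) followed by the renewal--reward SLLN amounts to proving the ergodic theorem for this particular chain by hand.

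What each route buys:
\begin{itemize}
\item The paper's argument is shorter.
\item Yours is more elementary and self-contained. More importantly, it never uses the Markov property of $(\zeta_n)$, only the fact that distinct runs read $\beta$ at distinct sets $K_j$.
\end{itemize}
That robustness is exactly why the paper sketches this same renewal argument in the Remark after its proof. There it handles the weaker hypothesis where each $(\beta_{n,K})_{n\ge\max K}$ is only stationary, so that $(\zeta_n)$ is no longer a Markov chain.
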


\begin{proof}
One indeed has, under $\P$, 
\begin{align*}
\text{for all $n\ge1$,}\qquad
\zeta_{n+1} & = (-1)^{\beta_{\kappa_{n+1}(\xi)}} = 
\begin{cases}
    (-1)^{\beta_{\kappa_n(\xi)}}=\zeta_n, & \text{if $\xi_{n+1}=+1$}\\
    (-1)^{\beta_{\kappa_{n}(\xi)\cup\{n+1\}}}\text{ independent of $\zeta_1,\ldots,\zeta_n$,} & \text{if $\xi_{n+1}=-1$}. 
\end{cases}
\end{align*}
Thus, under $\P$, $(\zeta_n)_n$ is a Markov chain on $\sE$ where at each step $n$, $\zeta_{n+1}$ is either equal to $\zeta_n$ or to an independent copy of $\zeta_n$, with equal probabilities. This Markov chain is ergodic, with the distribution of $(-1)^{\beta_K}$ (for any $K$) as invariant distribution. By the ergodic theorem, 
\[
\text{$\P$-a.s.,}\qquad\frac1N\sum_{n=1}^N\zeta_n\limites{}{N\to\infty}\E[(-1)^{\beta_\emptyset}]=1-2p. 
\]
This implies Condition (C). And since this convergence is $\P$-almost sure, it is also $P^\beta$-almost sure for a.e.\ $\beta$, so that $\Cbeta$ holds for almost every $\beta$. The annealed and quenched convergences follow. 
\end{proof}

\textit{Remark.} Instead of assuming that for each $K$, the sequence $(\beta_{n,K})_{n\ge\max K}$ is a.s.\ constant (equal to $\beta_K$), the same result could be obtained under the weaker assumption that for every $K$, the sequence $(\beta_{n,K})_{n\ge\max K}$ is \emph{stationary} (i.e.\ its distribution is shift invariant) with $\Br(p)$ marginal, and that these sequences are independent of each other. Although $(\zeta_n)_n$ is not a Markov chain anymore, it still has renewal times at instants $T_1,T_2,\ldots$ such that $\xi_{T_k}=-1$, so that splitting $\zeta_1+\cdots+\zeta_n$ at these renewal times leads to a sum of i.i.d.~terms whose expectation is $2(1-2p)$ by Wald's identity, which enables to conclude, via a law of large numbers for independent, non identically distributed r.v.'s (e.g.~via Kolmogorov's two-series theorem). 

\newpage

\appendix

\section{Representations of boolean functions}\label{app:boolean_functions}

In this appendix, we argue the choices of the two bases systems, inclusion-induced (I.I.), and totally unordered (T.U.).
The results of this appendix are not used in the main sections of the paper apart from the definitions of the representations, which are already mentioned in the introduction. It can therefore be skipped on first reading.

\subsection{Choosing a basis system}\label{sec:consistency}
As expressed in the introduction, our aim is to develop a more flexible framework that accommodates richer forms of dependence than the one given by a sequence $(\phi_n)_n$ that is independent and uniformly distributed: $\mathbb{P}(\phi_n=f_n) = 2^{-{2^n}}$, for any $f_n\in\sE^{\sE^n}$.

To this end, we recall that the space $\mathbb{B}_n$ of Boolean functions $\phi:\sE^n \to \sE$ can be endowed with the structure of a vector space over the field $\mathbb{F}_2 = \mathbb{Z}/2\mathbb{Z}$. In this representation, the zero vector corresponds to the constant function equal to $1$, and the algebraic operations are defined by
\[
\phi \oplus \psi = \phi \psi, 
\qquad
\beta \odot \phi =
\begin{cases}
1 & \text{if } \beta = 0, \\
\phi & \text{if } \beta = 1.
\end{cases}
\]
This space has dimension $2^n$. Our model relies on the choice of a sequence $\pa[\big]{(g_a)_{a\in\Er^n}}_{n\in\N}$ of bases of $\mathbb B_1$, $\mathbb B_2$,... respectively. 

To incorporate the time dimension into the choice of bases, we impose additional structure. 

For $x \in \sE^n$, define
\[
x_+ = (x,1) \in \sE^{n+1}, 
\qquad 
x_- = (x,-1) \in \sE^{n+1}.
\]
A family of bases
\[
\bigl\{(g_a)_{a \in \sE^n},\ n \in \mathbb{N}\bigr\}
\]
is said to be \emph{consistent} if $g_{a_\pm}(x_\pm)$ is either 1 or $g_a(x)$, uniformly in $a,x\in \sE^n$; that is, there exist $q_-,q_+,r_-,r_+\in\{0,1\}$ such that, for every $n \in \mathbb{N}$ and all $a\in \sE^n$, $g_{a_+}$ and $g_{a_-}$ restricted to $\sE^n\times\{1\}$ and $\sE^n\times\{-1\}$, and seen as functions on $\sE^n$, take the form

\begin{center}
\begin{tabular}{||l||c|c||}\cline{2-3}
\multicolumn{1}{c||}{} & $\sE^n\times\{-1\}$ & $\sE^n\times\{1\}$\\\hline
$g_{a_-}$ & $g_a^{q_-}$ & $g_a^{r_-}$\\\hline
$g_{a_+}$ & $g_a^{r_+}$ & $g_a^{q_+}$\\\hline
\end{tabular}
\end{center}

Starting at $n=0$ and setting $g_{\varpi}(\varpi)=-1$, where $\varpi$ is the empty tuple ($\varpi\in \sE^0$), a choice of $(q_-,q_+,r_-,r_+)\in\{0,1\}^4$ defines a consistent basis system if and only if for every $n \in \mathbb{N}$, the family $(g_{a_-},g_{a_+})_{a \in \sE^n}$ is linearly independent, that is
\begin{equation}\label{eq:consistent}
\prod_{a\in \sE^n}\big(g_{a_-}^{\alpha_{a_-}}g_{a_+}^{\alpha_{a_+}}\big)=1 \Leftrightarrow\forall a\in \sE^n,\ \alpha_{a_-}=\alpha_{a_+}=0.
\end{equation}
Applied to $x_+$ and $x_-$, for all $x$, and using the assumption that $(g_a)_{a\in \sE^n}$ is a basis, the above gives
$$\forall a\in \sE^n,\quad\left\{\begin{array}{lcl}
q_-\alpha_{a_-}+r_+\alpha_{a_+}=0\\
r_-\alpha_{a_-}+q_+\alpha_{a_+}=0\\
\end{array}
\right.
\Leftrightarrow\forall a\in \sE^n,\ \alpha_{a_-}=\alpha_{a_+}=0.$$
This is equivalent to requesting that $q_-q_+\neq r_-r_+$.

Therefore only 6 possible consistent basis systems exist.
\begin{enumerate}

\item \label{case:totallyunordered}\underline{$(q_-,q_+,r_-,r_+)=(1,1,0,0)$}
\begin{center}
\begin{tabular}{||l||c|c||}\cline{2-3}
\multicolumn{1}{c||}{} & $\sE^n\times\{-1\}$ & $\sE^n\times\{1\}$\\\hline
$g_{a_-}$ & $g_a$ & 1\\\hline
$g_{a_+}$ & 1 & $g_a$\\\hline
\end{tabular}
\quad that is\quad
$\displaystyle
g_a(x) =
\begin{cases}
-1 & \text{if } x = a, \\
1 & \text{otherwise}.
\end{cases}
$
\end{center}
This corresponds to the totally unordered case.

\item \underline{$(q_-,q_+,r_-,r_+)=(0,0,1,1)$}
\begin{center}
\begin{tabular}{||l||c|c||}\cline{2-3}
\multicolumn{1}{c||}{} & $\sE^n\times\{-1\}$ & $\sE^n\times\{1\}$\\\hline
$g_{a_-}$ & 1 & $g_a$\\\hline
$g_{a_+}$ & $g_a$ & 1\\\hline
\end{tabular}
\quad that is\quad
$\displaystyle
g_a(x) =
\begin{cases}
-1 & \text{if } x = -a, \\
1 & \text{otherwise}.
\end{cases}
$
\end{center}
This corresponds to the relabeling ($a\to-a$) of the totally unordered basis system.

\item \label{case:inclusioninduced}\underline{$(q_-,q_+,r_-,r_+)=(1,1,0,1)$}
\begin{center}
\begin{tabular}{||l||c|c||}\cline{2-3}
\multicolumn{1}{c||}{} & $\sE^n\times\{-1\}$ & $\sE^n\times\{1\}$\\\hline
$g_{a_-}$ & $g_a$ & 1\\\hline
$g_{a_+}$ & $g_a$ & $g_a$\\\hline
\end{tabular}
\quad that is\quad
$\displaystyle
g_a(x)=
\begin{cases}
-1 & \text{if } \kappa(x)\supset \kappa(a), \\
1  & \text{otherwise}
\end{cases}
=\max_{k\in\kappa(a)}x_k
$
\end{center}
where $\kappa(x)=\{k\in\IeFer{n}:x_k=-1\}$.
This corresponds to the inclusion-induced order.

\item \underline{$(q_-,q_+,r_-,r_+)=(1,0,1,1)$}
\begin{center}
\begin{tabular}{||l||c|c||}\cline{2-3}
\multicolumn{1}{c||}{} & $\sE^n\times\{-1\}$ & $\sE^n\times\{1\}$\\\hline
$g_{a_-}$ & $g_a$ & $g_a$\\\hline
$g_{a_+}$ & $g_a$ & 1\\\hline
\end{tabular}
\quad that is\quad
$\displaystyle
g_a(x) =
\begin{cases}
-1 & \text{if } \kappa(x)\supset\kappa(-a), \\
1 & \text{otherwise}.
\end{cases}
=\max_{k\in\kappa(-a)}x_k
$
\end{center}
This case corresponds to the relabeling ($a\to-a$) of the inclusion-induced basis system.

\item \label{case:reverserinclusioninduced}\underline{$(q_-,q_+,r_-,r_+)=(1,1,1,0)$}
\begin{center}
\begin{tabular}{||l||c|c||}\cline{2-3}
\multicolumn{1}{c||}{} & $\sE^n\times\{-1\}$ & $\sE^n\times\{1\}$\\\hline
$g_{a_-}$ & $g_a$ & $g_a$\\\hline
$g_{a_+}$ & 1 & $g_a$\\\hline
\end{tabular}
\quad that is\quad
$\displaystyle
g_a(x)=
\begin{cases}
-1 & \text{if } \kappa(x)\subset \kappa(a), \\
1  & \text{otherwise}
\end{cases}
=-\min_{k\in\kappa(-a)}x_k
$
\end{center}
This corresponds to the reverse inclusion-induced order.

\item \underline{$(q_-,q_+,r_-,r_+)=(0,1,1,1)$}
\begin{center}
\begin{tabular}{||l||c|c||}\cline{2-3}
\multicolumn{1}{c||}{} & $\sE^n\times\{-1\}$ & $\sE^n\times\{1\}$\\\hline
$g_{a_-}$ & 1 & $g_a$\\\hline
$g_{a_+}$ & $g_a$ & $g_a$\\\hline
\end{tabular}
\quad that is\quad
$\displaystyle
g_a(x) =
\begin{cases}
-1 & \text{if } \kappa(x)\subset\kappa(-a), \\
1 & \text{otherwise}.
\end{cases}
=-\min_{k\in\kappa(a)}x_k
$
\end{center}
This corresponds to the relabeling ($a\to-a$) of the reverse inclusion-induced order.

\end{enumerate}
Note that $\kappa(-a)=\kappa(a)^c$ so that the condition $\kappa(x)\subset\kappa(-a)$ translates to $\kappa(x)\cap\kappa(a)=\emptyset$, and $\kappa(-x)\subset\kappa(a)$ to $\kappa(x)\cup\kappa(a)=\IeFer{n}$. Also, $x=a$ if and only $\kappa(x)=\kappa(a)$ so that the 6 basis systems are based on a comparison of $\kappa(x)$ to $\kappa(\pm a)$ using the relations $=$, $\supset$ or $\subset$. 

One may also stipulate that for every $n\in\mathbb{N}$ and all $a,x\in\sE^n$, $g_{a_+}(x_+) = g_{a_-}(x_-) = g_a(x)$, which expresses a natural compatibility of the basis system with the recursive construction of the hypercube. In this case one must have $q_-=q_+=1$, and only three basis systems remain: the totally unordered case (\ref{case:totallyunordered}), the inclusion-induced order (\ref{case:inclusioninduced}) and the reverse inclusion-induced order (\ref{case:reverserinclusioninduced}).

Among these, the totally unordered and inclusion-induced systems play a primal role. As explained in the next subsection, they coincide with two classical bases of the vector space of Boolean functions: the the canonical basis and the basis of monomials underlying the Algebraic Normal Form. This connection provides an additional justification for focusing on
these two representations throughout the paper.

Note that it will be natural to transfer the indexing from $\mathcal{E}^n$ to $2^{\IeFer{n}}$. Accordingly, the family $(g_a)_{a\in\mathcal{E}^n}$ is reindexed as $(g_K)_{K\subseteq\IeFer{n}}$, via the previous bijection $\kappa:\Er^n\to2^{\IeFer n}$. 

\subsection{Connection to Algebraic Normal Form}
\label{sec:representations}

The purpose of this subsection is to explain why the totally unordered and inclusion-induced basis systems identified above are particularly natural. Beyond arising from the consistency requirements of the previous subsection, they coincide with two fundamental representations of Boolean functions. The totally unordered basis corresponds to the canonical basis of $\F_2^{\sE^n}$, while the inclusion-induced basis corresponds to the monomial basis underlying the Algebraic Normal Form (ANF). We now recall these two representations and their algebraic interpretation.

Note first that, for any $n$, for a given family $(g_K)_{K\subset\N}$ of functions $\Er^n\to\Er$, one may equivalently write, for any \{0,1\}-valued coefficients $(\beta_{n,K})_{K\subset\IeFer n}$,
\[
\phi_n = \prod_{K\subset\IeFer n}g_K^{\beta_{n,K}}\qquad\text{or}\qquad
\phit_n=\sum_{K\subset\IeFer n}\beta_{n,K}\gt_K,\] 
where $\widetilde{-1}=1$ and $\widetilde{+1}=0$, i.e.~for $a\in\sE$, $\widetilde a\in\F_2$ is such that $(-1)^{\widetilde a}=a$. 
In other words, existence and uniqueness of such a representation of a function $\phi_n:\Er^n\to\Er$ amounts to the fact that $(\gt_K)_{K\subset\IeFer n}$ is a basis of the $\F_2$-vector space $\F_2^{\sE^n}$. 

$\bullet$ The most obvious basis is the \textbf{canonical basis} $(\et_K)_{K\subset\IeFer n}$ of $\F_2^{\sE^n}$, i.e.\ indicator functions of singletons: for all $K\subset\IeFer n$,
\[\forall L\subset\IeFer n,\qquad \et_K(u_L)=\ind{(K=L)}.\]
The decomposition of any $\phi_n:\sE^n\to\sE$ in this basis reads as follows: 
\[
\forall x=(x_1,\ldots,x_n)\in\sE^n,\qquad \phi_n(x)=\prod_{K\subset\IeFer n}e_K(x)^{\beta_{n,K}}=(-1)^{\beta_{n,\kappa(x)}}.
\]
This decomposition was referred to as the \emph{totally unordered} case in~\cite{CHSW}, where it was little used. While legitimate, this decomposition turns out to behave singularly when we randomly sample $\phi_n$ by sampling i.i.d.\ $(\beta_{n,K})_{K\subset\IeFer n}$, in that the joint sequence $\pa[\big]{(\xi_n,\eta_n)}_{n\ge1}$ is independent, which greatly simplifies the analysis. This is the topic of Section~\ref{sec:totally_unordered}. 

\begin{figure}
\[
  \begin{tikzcd}
    x\in\sE^n \arrow{r}{\phi_n} \arrow{dr}{\phit_n} \arrow{d}{\sim} & \sE \arrow{d}{\sim} \\
   \xt\in(\F_2)^n\arrow{r}{\phih_n}  & \F_2
  \end{tikzcd}
\]
\caption{Summary of notations in Section~\ref{sec:representations}: depending on the identification between $a\in\sE$ and $\widetilde a\in\F_2$ via $a=(-1)^{\widetilde a}$, every function $\phi_n:\sE^n\to\sE$ may equivalently be viewed as being defined on $(\F_2)^n$ and/or taking values in $\F_2$, and is then denoted by $\phih_n$ or $\phit_n$ as sketched.  }
\label{fig:cd}
\end{figure}

$\bullet$ Since $\sE^n$ can be identified with $(\F_2)^n$, every function $\phit_n:\sE^n\to\F_2$ can be viewed as a function $\phih:(\F_2)^n\to\F_2$ (i.e. $\phih(\xt)=\phit(x)=\widetilde{\phi(x)}$, see also Figure~\ref{fig:cd}). Because $(\F_2)^n$ is finite, every such function coincides with a polynomial in $n$ variables (as can be seen for instance using interpolation polynomials). Noting that, for $\eps\in\F_2$, $\eps^n=\eps$ for every $n\ge1$, we deduce that every polynomial function, hence every function $\phih:(\F_2)^n\to\F_2$, decomposes over monomials $\eps\mapsto \eps_{i_1}\cdots \eps_{i_k}$, i.e.\ as follows:
\[\forall \eps_1,\ldots,\eps_n\in\F_2,\qquad \phih(\eps_1,\ldots,\eps_n)=\sum_{K\subset\IeFer n}\beta_{n,K}\prod_{i\in K}\eps_i\]
for some coefficients $\beta_{n,K}\in\F_2=\{0,1\}$,  $K\subset\IeFer n$. 
The existence of this decomposition and the size $2^n$ of the family of monomials $\Ph_K:\eps\mapsto \prod_{i\in K}\eps_i$, where $K\subset\IeFer n$, which matches the dimension of $(\F_2)^{\sE^n}$, proves that this family is a basis, known as the \textbf{basis of monomials}, and hence that this decomposition is unique. 

This decomposition is ubiquitous in the theory of boolean functions (see, for instance, \cite[Theorem 1.33]{booleanfunctions} and references therein) where depending on context it is known as Algebraic Normal Form, Ring Sum Expansion, Zhegalkin polynomial, or Reed-Muller expansion. 

Translated for functions $\phi_n:\sE^n\to\sE$, this decomposition reads:
\[\forall x_1,\ldots,x_n\in\sE,\qquad \phi_n(x_1,\ldots,x_n)=\prod_{K\subset\IeFer n}(\max_{i\in K}x_i)^{\beta_{n,K}}.\]
Note indeed that $\prod_{i\in K} \xt_i = 1$ if and only if $\xt_i=1$ for all $i\in K$, i.e.\ $x_i=-1$ for all $i\in K$, in other terms $\max_{i\in K}x_i = -1$. This decomposition is thus no other than the main decomposition considered in~\cite{CHSW}, which there corresponded to the inclusion order among subsets. 

\textit{Remark.} As a side note, it turns out that the basis of monomials is closely related to the canonical basis in that they are Möbius dual of each other:
\[\forall K,L\subset\IeFer n,\qquad \Ph_K(\xt_L)=\sum_{M\subset L}\eh_K(\xt_M)\qquad\text{and}\qquad\eh_K(\xt_L)=\sum_{M\subset L}\Ph_K(\xt_M),\]
which entails for instance that one may obtain the coefficients $(\beta_{n,K})_{K\subset\IeFer n}$ of $\phih_n$ on the basis of monomials as
\[\forall K\subset\IeFer n,\qquad\beta_{n,K}=\sum_{M\subset K}\phih_n(\xt_M),\]
i.e.\ 
\begin{equation}\label{eq:inversion}
\forall K\subset\IeFer n,\qquad(-1)^{\beta_{n,K}}=\prod_{M\subset K}\phi_n(x_M).
\end{equation}
The family of representations introduced in~\cite[page 4]{CHSW} can actually fit within this scheme: for any partial order $\preceq$ on $\sub n$, the family $(g_K)_{K\subset\IeFer n}$ of functions $\sE^{\sE^n}\to\sE$ defined by
\[
\forall K,L\subset\IeFer n,\qquad \widehat g_K(\xt_L)=\sum_{M\preceq L} \widehat e_K(\xt_M) = \ind{(K\preceq L)}
\]
constitutes a basis of $(\F_2)^{\sE^n}$. The canonical basis $(\widehat e_K)_K$ corresponds to the case of a totally unordered set (where $K\subset L$ if and only if $K=L$), while the monomial basis $(\widehat P_K)_K$ corresponds to the usual inclusion order. However in this setting the duality is in general not symmetric (the inversion formula involves the Möbius-Rota function of the partial order). 

In this paper, we investigate bootstrap schemes obtained by randomizing the coordinates $\beta_{n,K}$
appearing in these representations. Although the totally unordered and
inclusion-induced representations parameterize the same space of Boolean
functions, they encode randomness in fundamentally different ways.
Indeed, in the totally unordered representation, $\displaystyle\phi_n(x)=(-1)^{\beta_{n,\kappa(x)}}$,
so that the value of $\phi_n(x)$ is determined by a single coefficient. By contrast, in the inclusion-induced representation, $\displaystyle\phi_n(x)=(-1)^{\sum_{K\subset\kappa(x)}\beta_{n,K}}$, and therefore depends on the aggregate contribution of all coefficients indexed by subsets of $\kappa(x)$. Each representation thus induces a different mechanism of randomization of the underlying Boolean function.

The canonical basis and the basis of monomials, although far from being the only possible bases of $\F_2^{\sE^n}$, are the two most prominent in the literature. The former is the canonical basis of the space of all functions, while the latter is the canonical basis when Boolean functions are viewed as polynomial functions over $\F_2$. Their close connection with the combinatorics of the subset lattice on $2^{\IeFer n}$, together with the markedly different random models they generate, makes them natural objects of study in the present work.

Let us finally emphasize that if the coordinate vector $(\beta_{n,K})_{K\subset\IeFer n}$ is chosen uniformly at random, then the resulting distribution on functions $\phi_n\in\sE^{\sE^n}$ is uniform, irrespective of the chosen basis. Thus the choice of representation does not affect the marginal law of $\phi_n$, although it may have a profound impact on the joint structure
of the sequence $(\phi_n)_{n\ge1}$.

\textit{Note on $\max$ vs $\min$.} 
Note that the previous discussion relies on the arbitrary identification of $-1,+1$ with $1,0$, respectively. For the sake of completeness, let us mention that the other identification, i.e.\ with $0,1$ respectively, would mainly amount to changing $\min$ for $\max$ in the I.I.\ representation. The proofs adapt seamlessly to this essentially equivalent setting. 

\section{Useful lemmas}\label{appendix}

\begin{lemma}\label{lem:coupling}
Let $S_0$ be an integer-valued r.v., and $(z_n)_{n\ge1}$ be i.i.d.\ Bernoulli r.v.'s of parameter $1/2$, independent of $S_0$. Define, for all $n\ge1$, $S_n=S_0+z_1+\cdots+z_n$. 

Let $\alpha\in(0,1)$, and $\ell_n:=\floor{\frac{1-\alpha}2\log_2 n}$.  For all $n\ge1$,  one may couple $S_n$ with a random variable $U_n$ that is uniformly distributed on $\{0,1,\ldots,2^{\ell_n}-1\}$, in such a way that 
\[\P(S_n \not\equiv U_n\modulo2^{\ell_n})\le 2e^{-\pi^2 n^\alpha}.\]
\end{lemma}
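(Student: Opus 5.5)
The plan is to take for $U_n$ a \emph{maximal coupling} of $S_n \bmod 2^{\ell_n}$ with the uniform law on $\IeFer{2^{\ell_n}}-1\cup\{0\}$, i.e.\ on $\{0,\ldots,2^{\ell_n}-1\}$. By the coupling characterization of total variation, such a coupling exists on an enlarged space and satisfies
\[
\P(S_n\not\equiv U_n \modulo 2^{\ell_n})=d_{TV}\big(\mathcal L(S_n\bmod M),\,\mathrm{Unif}(\Z/M\Z)\big),\qquad M:=2^{\ell_n}.
\]
The lemma therefore reduces to bounding this total variation distance by $2e^{-\pi^2n^\alpha}$. I will do this by Fourier analysis on $\Z/M\Z$. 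The point is that the law of $S_0$ is arbitrary, but it enters only through a characteristic function of modulus at most $1$.

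\textbf{Fourier step.} For $r\in\Z/M\Z$, independence of $S_0$ and the $z_i$ gives
\[
\P(S_n\equiv r)=\frac1M\sum_{j=0}^{M-1}e^{-2\pi i jr/M}\,\E\big[e^{2\pi i jS_0/M}\big]\Big(\frac{1+e^{2\pi i j/M}}2\Big)^n .
\]
The $j=0$ term equals $1/M$. For the other terms, $\big|\tfrac{1+e^{2\pi ij/M}}2\big|=|\cos(\pi j/M)|$. Hence
\[
\abs[\Big]{\P(S_n\equiv r)-\frac1M}\le\frac1M\sum_{j=1}^{M-1}|\cos(\pi j/M)|^n ,
\]
and summing over $r$ and halving yields
\[
d_{TV}\le\frac12\sum_{j=1}^{M-1}|\cos(\pi j/M)|^n .
\]

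\textbf{Estimating the sum.} Set $d=\min(j,M-j)\in\{1,\ldots,M/2\}$. Each $d<M/2$ occurs twice and $d=M/2$ occurs once, and $|\cos(\pi j/M)|=\cos(\pi d/M)$ with $\pi d/M\in(0,\pi/2]$. So
\[
d_{TV}\le\sum_{1\le d\le M/2}\cos(\pi d/M)^n .
\]
I then use the scale $M^2=4^{\ell_n}\le n^{1-\alpha}$, which gives $n/M^2\ge n^{\alpha}$. With an estimate $\cos x\le e^{-cx^2}$ on $[0,\pi/2]$, each term is at most $e^{-c\pi^2d^2n^\alpha}$. The sum over $d$ is then dominated by its first term, up to a geometric factor $(1-e^{-3c\pi^2 n^\alpha})^{-1}$ that is harmless since it is at most $2$ once $n^\alpha$ is bounded below. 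For the few small $n$ where it is not, $\ell_n$ is tiny. In particular when $\ell_n=0$ the lemma is trivial, since $M=1$ and $U_n=0$ couples exactly. This produces the form $2e^{-\pi^2 n^\alpha\cdot(\text{const})}$.

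\textbf{Main obstacle: the constant in the exponent.} The estimate above only gives $2e^{-\pi^2n^\alpha}$ if $c=1$. The elementary bound $\cos x\le e^{-x^2/2}$ only gives $c=1/2$, and it is sharp near $0$. So the heart of the proof is to recover the missing factor $2$ in the exponent. I do not currently see how to do this, and the plan does not yet reach the stated constant.

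The first thing I would try is an $L^2$/Parseval version: use $d_{TV}\le\frac12\sqrt M\,\big(\sum_{j\ne0}\cos(\pi j/M)^{2n}\big)^{1/2}$, which doubles the power inside the sum. The square root then halves it again, and the prefactor $\sqrt M\le n^{(1-\alpha)/4}$ is not absorbed by the constant $2$. So this route does not obviously give $c=1$.

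The second thing I would try is to reduce to a walk with $\pm1$ steps. Pair the $z_i$, or condition on the parity of $S_n-S_0$, so that the relevant characteristic function becomes $\cos(2\pi j/M)$ over $n/2$ steps. This yields terms $\cos(2\pi d/M)^{n/2}\le e^{-\pi^2d^2n/M^2}$. Such a reduction must also handle the parity and the restriction $2d\le M/2$.

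If neither route gives $c=1$, I would check whether the intended choice of $\ell_n$, or the form of the bound actually needed downstream, allows the exponent to be stated with $\pi^2$. This matters only for the exponent constant. The application in Proposition~\ref{prop:A_markov} only requires a bound that is $o(n^{-1})$, which any constant $c>0$ provides.
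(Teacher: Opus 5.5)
Your argument is the paper's argument: a maximal coupling of $S_n \bmod M$ (with $M=2^{\ell_n}$) with the uniform law, discrete Fourier inversion with $\abs{\E[e^{2i\pi jS_n/M}]}\le\abs{\cos(\pi j/M)}^n$, the bound $\cos x\le e^{-x^2/2}$ on $[0,\pi/2]$, the scaling $n4^{-\ell_n}\ge n^\alpha$, and a geometric tail. Run with $c=1/2$, it yields $\P(S_n\not\equiv U_n \bmod M)\le 2e^{-(\pi^2/2)n^\alpha}$ for every $n\ge1$. No small-$n$ discussion is needed, because the factor $(1-e^{-(3\pi^2/2)n^\alpha})^{-1}$ is already at most $2$ at $n=1$. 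This is exactly the bound the paper's own proof reaches in its last line, so the exponent $\pi^2$ in the statement is a misprint for $\pi^2/2$.

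Your ``main obstacle'' is therefore not a gap in your argument. The bound with $\pi^2$ is false, and you should stop trying to recover the factor $2$. The maximal coupling is optimal, so the statement is equivalent to $d_{\mathrm{TV}}\le 2e^{-\pi^2 n^\alpha}$. On the other hand, testing against $r\mapsto e^{2i\pi r/M}$ (whose uniform average is $0$) gives $d_{\mathrm{TV}}\ge\frac12\abs{\E[e^{2i\pi S_n/M}]}=\frac12\cos(\pi/M)^n$ when $S_0=0$.

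A concrete case: take $\alpha=1/2$ and $n=256$. Then $M=4$ and $\P(S_n\equiv r\bmod 4)=\frac14\bigl(1+2^{-127}\cos(\pi r/2)\bigr)$, so $d_{\mathrm{TV}}=2^{-129}\approx e^{-89.4}$. This is far above $2e^{-16\pi^2}\approx 2e^{-157.9}$.

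The failure is not isolated. Write $\{\cdot\}$ for the fractional part. Then $n/M^2=n^\alpha\, 4^{\{\frac{1-\alpha}{2}\log_2 n\}}$ and $\cos(\pi/M)^n=e^{-(\pi^2/2)(1+o(1))\,n/M^2}$. Infinitely many $n$ have $n/M^2\le\sqrt2\, n^\alpha$, so for every $\alpha$ the stated bound fails infinitely often.

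This also explains why both of your repair routes must fail. No $L^2$ argument, and no regrouping or conditioning of the steps, can beat a true lower bound. In particular, the pairing route cannot produce $\cos(2\pi j/M)^{n/2}$: that quantity is smaller than $\cos(\pi j/M)^n$, the actual modulus of the Fourier coefficient of the law of $S_n\bmod M$.

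As you observe, Proposition~\ref{prop:A_markov} only needs a stretched-exponential bound. The correct constant $2e^{-(\pi^2/2)n^\alpha}$ is therefore all that is used there, and the rest of that proof is unaffected.
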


\begin{proof}
The proof relies on bounding the total variation distance between the distribution of $(S_n\modulo2^{\ell_n})$ and the uniform distribution on $\{0,1,\ldots, 2^{\ell_n}-1\}$. 
For any $k\in\Z$, by the (elementary\footnote{For any r.v.\ $X$ on $\Z$, and any $k\in\Z$, $N\in\N$, 
$\sum_{j=0}^{N-1}e^{\frac{2i\pi}N j(X-k)} =N\ind{\{X-k\equiv0\modulo N\}}$ 
hence
$\P(X \equiv k\modulo N)
	 = \frac1N\sum_{j=0}^{N-1}e^{-\frac{2i\pi}Njk}\E[e^{\frac{2i\pi}NjX}]$}) inversion formula for the discrete Fourier transform one has 
\begin{align*}
\P(S_n\equiv k\modulo 2^{\ell_n})
	& = 2^{-\ell_n}\sum_{j=0}^{2^{\ell_n}-1}e^{-2i\pi jk2^{-\ell_n}}\E[e^{2i\pi j S_n 2^{-\ell_n}}]
\end{align*}
and, for any value of $j$, 
\[\E[e^{2i\pi jS_n 2^{-\ell_n}}]=\E[e^{2i\pi jS_02^{-\ell_n}}]\pa[\Big]{\frac12+\frac12\exp(2i\pi j2^{-\ell_n})}^n = \E[e^{2i\pi jS_02^{-\ell_n}}]\cos(\pi j 2^{-\ell_n})^n e^{i\pi nj 2^{-\ell_n}}
\]
so that, singling out the term $j=0$, 
\begin{align*}
\abs[\big]{\P(S_n\equiv k\modulo 2^{\ell_n})-2^{-\ell_n}}
	& \le 2^{-\ell_n}\sum_{j=1}^{2^{\ell_n}-1}\abs[\Big]{\cos\pa[\Big]{\pi j2^{-\ell_n}}}^n\\
	& = 2^{1-\ell_n}\sum_{j=1}^{2^{\ell_n-1}-1}\cos\pa[\Big]{\pi j2^{-\ell_n}}^n\\
\end{align*}
by symmetry of $\cos$ aroung $\frac\pi2$ and removing the middle term $\cos\frac\pi2=0$. 
Using the inequality $\cos x\le e^{-\frac{x^2}2}$, holding for all $x\in[0,\pi/2]$ (as can be seen for instance by integrating twice the inequality $\phi''\ge1$ where $\phi=\log\cos$, on $[0,\pi/2)$), and then the definition of $\ell_n$, one can bound, for $n\ge1$, 
\begin{align*}
\sum_{j=1}^{2^{\ell_n-1}-1}\cos\pa[\Big]{\pi j2^{-\ell_n}}^n
	& \le \sum_{j=1}^{2^{\ell_n-1}-1}e^{-n(\pi^2/2) j^2  4^{-\ell_n}}
	 \le \sum_{j=1}^{\infty}e^{-n(\pi^2/2) j 4^{-\ell_n}} \\
	& \le \sum_{j=1}^\infty e^{-j (\pi^2/2) n^{\alpha}}  =\frac1{1-e^{-(\pi^2/2)n^{\alpha}}} e^{-(\pi^2/2)n^{\alpha}}
\end{align*}
hence finally, for all $n\ge1$, 
\begin{align*}
\frac12\sum_{k=0}^{2^{\ell_n}-1}\abs[\big]{\P(S_n\equiv k\modulo 2^{\ell_n})-2^{-\ell_n}}\le \frac1{1-e^{-\pi^2n^{\alpha}}}e^{-(\pi^2/2)n^{\alpha}}\le2e^{-(\pi^2/2)n^{\alpha}}. 
\end{align*}
The above left hand side is the total variation distance between the law of $(S_n\modulo2^{\ell_n})$ and the uniform distribution on $\{0,1,\ldots,2^{\ell_n}-1\}$. Classically (see for instance~\cite[p.50]{LevinPeres}), this implies the existence of a coupling between $S_n$ and a uniform random variable $U_n$ such that the probability $\P(S_n\not\equiv U_n\modulo 2^{\ell_n})$ is equal to this distance, hence bounded by $2e^{-(\pi^2/2)n^\alpha}$. 
\end{proof}

\begin{lemma}\label{lem:subexponential}
    For any $a\in[0,1)$, as $n\to\infty$, \[\sum_{k=0}^n {n\choose k} a^{2^k} = e^{O(\log^2 n)}.\]
\end{lemma}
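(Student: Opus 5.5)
We bound the sum by grouping the terms according to the size of $2^k$ relative to $n$. Since $a\in[0,1)$, the factor $a^{2^k}$ becomes doubly exponentially small once $2^k$ exceeds a large multiple of $\log n$. Meanwhile $\binom nk\le n^k$ is only singly exponential in $k\log n$. The lower bound is immediate: the sum is at least its $k=0$ term $a$, so its logarithm is at least $\log a$. For $a=0$, reading $0^{2^k}=0$ for all $k$ (in particular $0^{1}=0$ at $k=0$), the sum vanishes. That degenerate case is irrelevant to the paper's applications, where $a=1-2\epsilon>0$ and only an upper bound is used. So the real content is the upper bound $\sum_k\binom nk a^{2^k}\le e^{C\log^2 n}$, and we assume $a\in(0,1)$.

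\textbf{Step 1 (threshold).} Write $a=e^{-c}$ with $c>0$. Then each term satisfies
\[
\binom nk a^{2^k}\le \exp\bigl(k\log n-c\,2^k\bigr).
\]
Choose $k_0=k_0(n)$ minimal such that $c\,2^{k}\ge 2k\log n$ for all $k\ge k_0$. Since $2^k/k$ is increasing for $k\ge 2$, it suffices to take $k_0\approx\log_2\log n+\log_2\log\log n+O(1)$, so in particular $k_0=O(\log\log n)$ and certainly $k_0=O(\log n)$.

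\textbf{Step 2 (tail).} For $k\ge k_0$ the exponent is at most $-k\log n\le -\log n$. The tail $\sum_{k\ge k_0}$ therefore has at most $n+1$ terms, each at most $1/n$, so it is bounded by $2$. Alternatively, for $k\ge1$ it is bounded by the geometric sum $\sum_k n^{-k}$.

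\textbf{Step 3 (head).} For $k<k_0$ we simply use $a^{2^k}\le1$ and $\binom nk\le n^k$, which gives
\[
\sum_{k<k_0}\binom nk a^{2^k}\le k_0\,n^{k_0}=\exp\bigl(k_0\log n+\log k_0\bigr)=\exp\bigl(O(\log n\,\log\log n)\bigr).
\]
This is $e^{O(\log^2 n)}$; in fact it is even $e^{O((\log n)(\log\log n))}$.

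Combining Steps 2 and 3 gives the upper bound. The only point requiring care is the choice of $k_0$ in Step 1, namely verifying that the inequality $c2^k\ge 2k\log n$ persists for every $k\ge k_0$ and not just at $k_0$. Monotonicity of $2^k/k$ handles this, so I expect no genuine obstacle. The stated $O(\log^2 n)$ is weaker than what the argument produces, and this matches how the lemma is used later in the paper: with $n$ replaced by $\ell_n=O(\log n)$, giving $e^{O((\log\ell_n)^2)}$.
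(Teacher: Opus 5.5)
Your proof is correct and follows the same strategy as the paper: bound $\binom nk\le n^k$, split the sum at a threshold, bound the head trivially by $k_0\,n^{k_0}$, and show that the tail is negligible. The only difference is the cut-off. The paper splits at $L_n=\lfloor(1+\eps)\log_2 n\rfloor$ and disposes of the tail with the crude bound $n\cdot n^n a^{2^{L_n}}\to0$. Your termwise comparison $c\,2^k\ge 2k\log n$ lets you cut at $k_0=O(\log\log n)$ instead, which gives the sharper $e^{O(\log n\log\log n)}$ that the paper only mentions without proof in its closing remark.
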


\begin{proof}
Let us denote by $v_n$ the above sum. We bound ${n\choose k}\le n^k$ and split the sum using $L_n=\floor{(1+\eps)\log_2 n}$ for an arbitrary $\eps>0$, to obtain
\begin{align*}
v_n 
	& \le \sum_{k=0}^{L_n-1}{n}^ka^{2^k}+
	 \sum_{k=L_n}^{n}{ n}^ka^{2^k}\\
	& \le L_n {n}^{L_n} + n{n}^na^{2^{L_n}}.
\end{align*}
Now, $n^{1+\eps}/2<2^{L_n}\le n^{1+\eps}$ from which we deduce that
$$\log(L_n {n}^{L_n}) \le \log(L_n)+(1+\eps)\log_2 n\log n = O((\log n)^2)$$
and
$$\log(nn^na^{2^{L_n}})  <
\log n+n\log n-\frac12n^{1+\eps}\log(1/a)\to-\infty,$$
so that the first term in the bound of $v_n$ is of the desired order, and the second one vanishes. 
\end{proof}

NB. The optimal bound would be $e^{O(\log n\cdot\log\log n)}$, however the above bound (or even $e^{o(n)}$) suffices in our applications.

\subsection*{Acknowledgments} This research was supported by the Australian Research Council (ARC) grant DP230102209.
L.T.\ wishes to thank the Center for Modeling of Stochastic Systems at Monash University, where this project was initiated, for its hospitality.

\end{document}